\documentclass[11pt]{amsart}

\usepackage[all,color]{xy}
\usepackage{pb-diagram}
\usepackage[mathscr]{eucal}
\usepackage{hyperref}
\hypersetup{
    colorlinks=true, 
    linktoc=all,     
    linkcolor=blue,  
}

\usepackage{xcolor}

\usepackage[toc,page]{appendix}
\usepackage{pifont}
\usepackage{combelow} 

\RequirePackage{ifthen,setspace,enumitem,tikz}
\usetikzlibrary{arrows}

\DeclareMathAlphabet{\mathpzc}{OT1}{pzc}{m}{it}

\usepackage{amsfonts}
\usepackage{amsmath}
\usepackage{amssymb}
\usepackage{needspace}

\newcommand{\tr}{\textnormal{tr}}

\newcommand{\dbar}{\overline{\partial}}

\newcommand{\Ric}[2]{R_{#1 \overline{#2}}}

\newcommand{\ddbar}{\sqrt{-1}\partial\dbar}

\def\Z{{\mathbb Z}}

\newtheorem{theorem}{Theorem}[section]
\newtheorem{proposition}{Proposition}[section]
\newtheorem{lemma}{Lemma}[section]

\numberwithin{equation}{section}

\renewcommand{\Ric}{\operatorname{Ric}}
\newcommand{\dc}{d^c}
\newcommand{\eps}{\varepsilon}
\newcommand{\Nash}{\mathcal N}
\newcommand{\HH}{\mathcal H}
\newcommand{\RR}{\mathcal R}
\renewcommand{\Z}{\mathcal Z}

\newcommand{\ip}[2]{\left\langle #1,#2\right\rangle}
\setlist[enumerate]{leftmargin=*,label=\textup{(\roman*)},itemsep=3pt}
\allowdisplaybreaks[1]
\hypersetup{pdftitle={Type I estimates for the Kahler-Ricci flow I},
 pdfauthor={Wangjian Jian and Jian Song},
 pdfsubject={Type I scalar curvature estimates at finite-time singularities}}
\title[Type I estimates for the K\"ahler-Ricci flow]{Type I estimates for the K\"ahler-Ricci flow I}
\author{Wangjian Jian}
\address{Institute of Mathematics, Academy of Mathematics and Systems Science,
Chinese Academy of Sciences, Beijing, 100190, China}
\email{wangjian@amss.ac.cn}
\thanks{Wangjian Jian is supported in part by NSFC grants 12422103, 12201610,
12371058, and 12288201; BJNSF grant JR25002; and National Key R\&D Program
of China grants 2023YFA1009900 and 2021YFA1003100.}
\author{Jian Song}
\address{Department of Mathematics, Rutgers University,
Piscataway, NJ 08854, USA}
\email{jiansong@math.rutgers.edu}
\thanks{Jian Song is supported in part by the National Science Foundation
grant DMS-2505575.}

\date{}
\begin{document}
\raggedbottom
\begin{abstract}
We establish the Type I estimate for the scalar curvature of the
K\"ahler-Ricci flow on compact K\"ahler manifolds, for arbitrary smooth
initial metrics, whenever the flow develops finite time singularities. This estimate is a building block for the analytic
minimal model program with Ricci flow.
\end{abstract}
\maketitle

\section{Introduction}\label{sec:introduction}

The Ricci flow was introduced by Hamilton \cite{Hamilton}. In the
K\"ahler setting, it offers a natural approach to the classification of
compact K\"ahler manifolds, through the evolution of their metrics.
This is the viewpoint of the analytic minimal model program proposed by
Song and Tian \cite{STMeasures,ST}, which seeks to realize the
contractions, flips and fibre collapsing of birational geometry through
the formation of singularities of the flow and their continuation. When
the canonical bundle is semi-ample, their work on canonical measures
describes the convergence of the flow towards canonical metrics
\cite{STMeasures}, while in the projective setting, their theory of the
weak K\"ahler-Ricci flow provides a continuation through divisorial
contractions and flips \cite{ST}. Song and Weinkove \cite{SW}
established Gromov-Hausdorff continuation across certain contractions
of exceptional divisors, giving concrete instances of the geometric
surgeries predicted by the program. A central problem which remains is
to understand the metric geometry of the flow at a general finite-time
singularity.

A fundamental model for this study is Perelman's work on the
K\"ahler-Ricci flow on Fano manifolds. His entropy and noncollapsing
theorems \cite{Perelman}, together with his estimates for the normalized
flow in the first Chern class, give uniform bounds for the scalar
curvature, the diameter and the Ricci potential (c.f. \cite{SesumTian}). For the
unnormalized flow, these become Type I bounds at the extinction time.
An important lesson of Perelman's estimates is that curvature and
distance bounds can be derived from a Ricci potential, even when the
full curvature tensor is not under control. A basic objective of the
analytic minimal model program is to extend this picture to flows which
contract subvarieties or collapse onto spaces of positive dimension.

Beyond the Fano case, scalar curvature estimates have also played an
essential role. When the canonical bundle is semi-ample, Song and Tian
\cite{STScalar} proved a uniform bound for the scalar curvature along
the normalized flow. For finite-time singularities, Zhang
\cite{ZhangFinite} showed that the scalar curvature must become
unbounded, and obtained an upper bound under an additional assumption on
the limiting class. The question addressed in the present paper is the
sharp rate at which the scalar curvature can grow at a general
finite-time singularity.

Our approach rests above all on the work of Jian, Song and Tian
\cite{JST}, who proposed a general Type I conjecture for the scalar
curvature and the fibre diameters, extending Perelman's estimates to
finite-time contractions and collapsing. They introduced the notion of
Ricci vertices and developed Li-Yau type estimates for weighted Ricci
potentials, together with Harnack and distance estimates. In particular,
they obtained Type I control of the scalar curvature near Ricci
vertices, as well as Gromov-Hausdorff compactness for Type I blow-ups
based at suitable points. Their complex geometric compactness theory,
which includes local partial $C^0$-estimates, produces limits on
analytic normal varieties. They also established fibre diameter
estimates for collapsing Fano bundles, and proved Type I singularity
formation for flows with Calabi symmetry \cite{JST}. Taken together,
these results tie the analysis of Ricci potentials to the geometric and
complex analytic structure of finite-time singularities.

In complex dimension two, Conlon, Hallgren and Ma \cite{CHM} proved the
Type I bound for the full curvature tensor at noncollapsed finite-time
singularities, and the volume-collapsing case was subsequently treated by Cifarelli, Conlon, Hallgren and Zhang \cite{CCHZ} and by Xu and Zhang \cite{XZ}, completing the
Type I picture for compact K\"ahler surfaces. For collapsing Fano
bundles, Jian and Song \cite{JSFanoII} established Type I estimates for
the scalar curvature and the fibre diameters for arbitrary smooth Fano
fibres, as well as a Type I bound for the full curvature tensor when the
fibre admits a smooth K\"ahler-Einstein metric.

\bigskip

Let $X$ be a compact   K\"ahler manifold of complex dimension $n$,
and let $g_0$ be a smooth K\"ahler metric on $X$. We consider the maximal
unnormalized K\"ahler-Ricci flow
\begin{equation}\label{eq:flow}
 \frac{\partial\omega}{\partial t}=-\Ric(\omega),\qquad
 \omega(0)=\omega_0,\qquad 0\le t<T<\infty,
\end{equation}
where $\omega_0$ is the K\"ahler form associated with $g_0$. We denote by $g(t)$ the
corresponding metric associated to $\omega(t)$, and by $R=R_{g(t)}=\tr_\omega\Ric(\omega)$ its scalar
curvature, normalized as in Section~\ref{sec:basic}.

The main result of this paper is the following theorem.

\begin{theorem}\label{thm:main}
Let $X$ be a compact K\"ahler manifold and let $g(t)$ be the maximal
solution of the K\"ahler-Ricci flow \eqref{eq:flow} with smooth initial
K\"ahler metric $g_0$. Assume that the maximal existence time
$T>0$ is finite. Then there exists a constant $C>0$, depending on $X$ and
$g_0$, such that
\begin{equation}\label{eq:main}
 \sup_X |R_{g(t)}|\le\frac{C}{T-t},\qquad 0\le t<T.
\end{equation}
\end{theorem}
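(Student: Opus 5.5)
The plan is to follow Perelman's strategy for Fano manifolds, with the Ricci potential replaced by a suitable weighted Ricci potential adapted to the limiting class, in the spirit of \cite{JST}.

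\textbf{Step 1: reduction to an upper bound.} The lower bound is classical. The maximum principle applied to $\partial_t R=\Delta R+|\Ric|^2\ge \Delta R+R^2/n$ gives $R\ge -C$, which is stronger than needed. So it suffices to prove $R\le C/(T-t)$.

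\textbf{Step 2: the weighted Ricci potential.} Write $[\omega(t)]=[\omega_0]-t\,c_1(X)$, and let $\chi\in -c_1(X)$ be a smooth form, say $\chi=-\Ric(\Omega)$ for a volume form $\Omega$. Consider the reference forms $\hat\omega_t=\omega_0+t\chi$. Since $[\omega(t)]$ approaches the boundary of the K\"ahler cone only at $t=T$, the class $[\omega_0]+T\chi$ is nef.

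Introduce
\[
u=(T-t)\,\partial_t\varphi+\varphi+ \text{(normalization)},
\]
so that
\[
\ddbar u=(T-t)\bigl(-\Ric(\omega)\bigr)+\omega-\omega_T ,
\]
where $\omega_T=\omega_0+T\chi$. Thus $u$ is a Ricci potential for the difference $\omega-(T-t)\Ric(\omega)$ relative to the limiting form. Then
\[
(T-t)R=n-\tr_\omega\omega_T-\Delta u,
\]
and it suffices to bound $-\Delta u$ from above, together with $|\nabla u|^2$ and $u$ itself, by constants. This mirrors Perelman's Fano estimates, in which $\omega_T=0$.

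\textbf{Step 3: Perelman-type estimates for $u$.} Zhang's bound and the standard parabolic Schwarz and $C^0$ arguments give $\dot\varphi\le C$, $\varphi\le C$ and $\tr_\omega\omega_T\ge 0$. The steps are then as follows.
\begin{enumerate}
\item Prove $u\ge -C$ and an upper bound for $u$ in terms of distance from a Ricci vertex, using the Li--Yau type and Harnack estimates of \cite{JST} and Perelman's $\mu$-functional monotonicity. Perelman's $\mu$-entropy on the rescaled time $T-t$ gives uniform $\kappa$-noncollapsing and a log-Sobolev inequality at scale $\sqrt{T-t}$.
\item Apply the maximum principle to $\frac{|\nabla u|^2}{u+C}$ and to $\frac{-\Delta u}{u+C}$, in the manner of Perelman/Sesum--Tian, using the evolution equation
\[
(\partial_t-\Delta)u=\tr_\omega\omega_T-n+\ldots .
\]
This yields
\[
|\nabla u|^2\le C(u+C)/(T-t)\quad\text{and}\quad -\Delta u\le C(u+C).
\]
After parabolic rescaling by $T-t$, these are exactly Perelman's gradient and Laplacian bounds.
\end{enumerate}

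\textbf{Step 4: uniform upper bound on $u$ (the main obstacle).} Type I control of $R$ near Ricci vertices is already available from \cite{JST}, so what remains is to bound $u$ uniformly on all of $X$. The difficulty is that in the collapsing or contracting regime there is no a priori diameter bound. In the Fano case, Perelman obtains the diameter bound from the gradient estimate combined with noncollapsing and a volume argument: the set where $u$ is large has small volume, yet contains many disjoint balls. I will attempt the analogue at scale $\sqrt{T-t}$.

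The gradient bound shows that $\sqrt{u+C}$ is $C/\sqrt{T-t}$-Lipschitz. So if $u$ were huge at some point, there would be long annuli in rescaled distance on which $u$ is large. Integrating the identity
\[
\int_X \bigl((T-t)R-n+\tr_\omega\omega_T\bigr)\,\omega^n=0,
\]
and combining it with $\kappa$-noncollapsing at scale $\sqrt{T-t}$, should show that such annuli carry too little $R$-weighted volume, which gives a contradiction as in Perelman's argument.

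The key new input, beyond Perelman, is controlling the nonnegative term $\tr_\omega\omega_T$, which has no Fano analogue. For this I expect to use that $\omega_T$ is a fixed smooth semipositive form, so that $\int\tr_\omega\omega_T\,\omega^n=n\int\omega_T\wedge\omega^{n-1}$ is cohomologically bounded by $C(T-t)^{k}$ in the correct rate.

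Once $u\le C$, Step 3 gives $-\Delta u\le C$. Step 2 then yields $(T-t)R\le C$, which completes the proof.
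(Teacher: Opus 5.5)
There is a genuine gap, and it sits in Step 3(ii), not in Step 4. Your $u$ is the paper's $U$, and it is bounded for elementary reasons. With the normalization $+nt$, $\partial_t u=n-(T-t)R\le C$ since $R\ge -C$, and $(\partial_t-\Delta)u=\tr_\omega\omega_T\ge 0$ gives $u\ge -C$ by the minimum principle. A Bochner argument even gives $|\partial u|_\omega^2\le C$, which is stronger than your gradient bound. So your ``main obstacle'' is vacuous. Since $(T-t)R=n-\tr_\omega\omega_T-\Delta u$ with $0\le\tr_\omega\omega_T\le C$, the claimed inequality $-\Delta u\le C(u+C)$ is equivalent to the theorem itself.

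That inequality does not come out of the Perelman--Sesum--Tian maximum principle. The argument concerns the normalized potential $\tilde u=u/(T-t)$ on $\tilde g=g/(T-t)$; only then does $\sqrt{-1}\partial\bar\partial\tilde u=\tilde\omega-\Ric(\tilde\omega)-\omega_T/(T-t)$ have Perelman's form up to a bounded term. At best it yields $-\tilde\Delta\tilde u\le C(\tilde u-\min\tilde u+C)$, i.e.\ $(T-t)R\le C\bigl(1+(u-\min u)/(T-t)\bigr)$. Your two inequalities are also not one consistent rescaling of Perelman's: rescaling $g$ but not $u$ produces your gradient bound, but then the Laplacian bound reads $-\Delta u\le C(u+C)/(T-t)$.

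Outside the Fano extinction case, the oscillation of $u$ is in general of order one rather than $O(T-t)$. For a divisorial contraction, for instance, $\sqrt{-1}\partial\bar\partial u\to\omega(T)-\omega_T$ away from the exceptional locus. So this route only gives $R\le C(T-t)^{-2}$. A diameter/volume argument at scale $\sqrt{T-t}$ cannot rescue it: when the flow contracts a subvariety or collapses onto a positive-dimensional base, the rescaled diameter genuinely diverges, so there is no contradiction to extract. This is precisely why \cite{JST} obtain Type I control only near Ricci vertices. Passing from Ricci vertices to arbitrary points is the whole content of the theorem, and your proposal supplies no mechanism for it.

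The paper's mechanism is of a different nature. It measures $\dc U$ modulo closed one-forms in the $L^2$ space of the conjugate heat measure, giving $D_p(\tau)$; this removes the ambiguity from local pluriharmonic modifications. It then proves $(1-t)R(x,t)\le C\bigl(1+D_{(x,t)}(1-t)/(1-t)^2\bigr)$ by evolving the minimizing closed form under the Hodge heat flow, combined with a Bochner cancellation. Finally it shows $D_p(\lambda)\le C\lambda^2$ by iterating $D_p(\tau)\le\eps D_p(8\tau)+C(\lambda+\tau)^2$ over the scales $\tau_j=8^j\lambda$. The contraction with small $\eps$ holds at all but boundedly many scales, namely those with small Nash entropy drop. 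It is proved by blowing up to Bamler's shrinking limits, where a closed, weighted co-closed $L^2$ one-form satisfies $\nabla_f^*\nabla\beta+\tfrac12\beta=0$ and therefore vanishes.

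A secondary point: $\tr_\omega\omega_T\ge 0$ and the Schwarz bound $\tr_\omega\omega_T\le C$ require $\omega_T$ to be a smooth semipositive form pulled back by a holomorphic contraction. An arbitrary representative $\omega_0+T\chi$ of a nef class need not be semipositive, and in the general K\"ahler setting this is exactly where the Hacon--Xie base-point-free theorem is needed.
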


Theorem~\ref{thm:main} establishes the scalar curvature part of the
conjecture of Jian, Song and Tian \cite{JST}, for arbitrary smooth
initial K\"ahler metrics on compact K\"ahler manifolds. No restriction
is placed on the initial K\"ahler class $[\omega_0]$, and neither
rationality nor projectivity is assumed. It is the transcendental
base-point-free theorem of Hacon and Xie \cite{HX} which allows us to
treat the limiting cohomology class in this generality. We view
Theorem~\ref{thm:main} as a building block of the analytic minimal model
program with Ricci flow. Indeed, it provides uniform control of the
scalar curvature at the natural scale of the singularity, and such
control is needed in order to relate the degeneration of the metrics to
the contraction determined by the limiting class.

Our starting point is the global Ricci potential $U$ introduced by Jian
and Song in \emph{Finite-Time Singularities of the K\"ahler-Ricci Flow on
Fano Bundles II} \cite{JSFanoII}, which builds on the Ricci-potential
framework of Jian, Song and Tian \cite{JST}. This global potential is
central to our argument, since it allows all regions of the flow to be
studied within a single global construction. The main new idea is to
consider the one-form $\dc U$ modulo $d$-closed one-forms, a formulation
which isolates the part of the potential that is relevant to curvature.

The main advantage of this viewpoint is that it eliminates the gauge
freedom in the potential. Taking the differential removes additive
constants, while modifying local potentials by pluriharmonic functions
only adds closed one-forms to $\dc U$, and these contributions are
removed as well once we pass to the quotient by closed one-forms. Thus
the quantity we estimate carries no energy arising solely from such
local choices of potential. Proposition~\ref{prop:scalar} makes this
idea effective: it controls the scalar curvature by the distance from
$\dc U$ to the closed one-forms, measured with respect to the conjugate
heat measure. This is the key technical and conceptual ingredient of the
proof.

To control this distance near a singularity, we rely on Bamler's theory
of entropy, heat kernels and compactness
\cite{BamHeat,BamCompact,BamStructure}, together with the work of
Hallgren and Jian \cite{HJ} and of Hallgren \cite{Hallgren} on
K\"ahler-Ricci tangent flows. These results allow us to pass to limiting shrinkers
and to study the one-forms there. A vanishing argument
on these limits then yields the improvement needed for the Type I
estimate.

In a forthcoming preprint, \emph{Type I estimates for the K\"ahler-Ricci
flow II} \cite{JSTypeII}, we shall develop applications of
Theorem~\ref{thm:main} to the geometry of finite-time singularities.
These include global and local diameter estimates, the identification of
the Gromov-Hausdorff limit space of the unnormalized flow with the
K\"ahler space induced by the limiting cohomology class, and global and
local partial $C^0$-estimates. We shall also show that Type I blow-up
limits are K\"ahler-Ricci soliton spaces carrying the structure of
normal complex analytic spaces. These applications will further connect
the scalar curvature estimate with the geometric realization of the
analytic minimal model program.

The paper is organized as follows. In Section~\ref{sec:basic}, we
collect the basic estimates for the limiting class, the heat kernel and
one-forms evolving by the Hodge heat flow. Section~\ref{sec:limits}
recalls the compactness and almost-soliton results used in the proof.
The Ricci potential is introduced in Section~\ref{sec:potential}, and
the integral estimate for the scalar curvature is established in
Section~\ref{sec:scalar}. The vanishing argument is given in
Section~\ref{sec:vanishing}, and the contracting iteration which
completes the proof is carried out in Section~\ref{sec:proof}.

\bigskip
\noindent\textbf{Acknowledgements.} We would like to thank Professor Gang Tian for his generous support and encouragement over the years, which have been a constant source of inspiration. This work was carried out with the assistance of ChatGPT. Our original
idea was to bound the scalar curvature by the $L^2$-energy of the
gradient of the global potential $U$ with respect to the conjugate heat
measure, but this approach turned out to be unsuccessful. ChatGPT made
the surprising correction of eliminating the $d$-closed kernel, and
provided Proposition~\ref{prop:scalar}, which is the key conceptual and technical ingredient of this paper.

\bigskip

\section{Basic estimates}\label{sec:basic}

Replacing $\omega(t)$ by $T^{-1}\omega(Tt)$, we may assume that $T=1$,
and we keep the same notation for the rescaled solution and its initial
metric. For the real metric, we use the normalization
\begin{equation}\label{eq:conventions}
 g=2\omega(\,\cdot\,,J\,\cdot\,),\qquad
 \partial_tg=-2\Ric_g,\qquad
 \Delta_g=\Delta=\omega^{i\bar j}\partial_i\partial_{\bar j}.
\end{equation}
Thus $g(0)=g_0$ before the time normalization, and
$R_g=\tr_\omega\Ric(\omega)=R$. The constant factor in
\eqref{eq:conventions} has no effect on the Type I estimate.
Throughout the paper, $C$ denotes a positive constant depending only on
the fixed initial data and on auxiliary background choices, whose value
may change from line to line. We also write
\[
 \square=\partial_t-\Delta,\qquad
 \dc=\sqrt{-1}(\bar\partial-\partial),\qquad
 d\dc=2\ddbar.
\]
Real norms, adjoints and covariant derivatives are taken with respect to
$g$, while norms of complex tensors are taken with respect to $\omega$.
In particular,
\begin{equation}\label{eq:norms}
 |du|_g^2=|\partial u|_\omega^2,\qquad
 |\beta|_g^2=|\beta^{0,1}|_\omega^2
 \quad\hbox{for every real one-form }\beta.
\end{equation}

\subsection{Preliminary estimates}\label{subsec:class}

By the maximal existence theorem of Tian and Zhang
\cite{TianZhang}, which builds on earlier work of Tsuji \cite{Tsuji},
the maximal existence time is characterized by
\[
 T=\sup\{t>0:[\omega_0]-2\pi t c_1(X)\text{ is a K\"ahler class}\}.
\]
With the normalization $T=1$, the limiting class
\[
 \alpha=[\omega_0]-2\pi c_1(X)
       =[\omega_0]+2\pi c_1(K_X)
\]
is nef. We shall use the following smooth adjoint case of the
transcendental base-point-free theorem.

\begin{theorem}[Hacon-Xie {\cite[Theorem 1.4]{HX}}]\label{thm:HX}
Let $X$ be a compact K\"ahler manifold and let
$\beta\in H^{1,1}(X,\mathbb R)$ be a K\"ahler class.
If $c_1(K_X)+\beta$ is nef, then there exist a surjective holomorphic
map $\Phi:X\to Y$ with   fibres onto a compact normal
K\"ahler space and a K\"ahler class $\gamma$ on $Y$ such that
\[
 c_1(K_X)+\beta=\Phi^*\gamma
 \quad\hbox{in }H^{1,1}(X,\mathbb R).
\]
\end{theorem}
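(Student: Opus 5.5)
The plan is to follow the strategy of the classical Kawamata--Shokurov base-point-free theorem, replacing every use of line bundles and sections by transcendental tools: currents, the Collins--Tosatti description of the non-K\"ahler locus, and the Kähler MMP with scaling in the form of Das--Hacon--P\u{a}un. In this paper the statement is simply cited from \cite{HX}, so what follows is only an outline of how one would reprove it. Put $L=c_1(K_X)+\beta$. This class is nef, and $L-c_1(K_X)=\beta$ is K\"ahler, so $L$ is a nef adjoint class with K\"ahler ``boundary''. Let $\nu=\nu(L)$ be its numerical dimension. The goal is to realise the $L$-trivial part of the geometry by a holomorphic contraction $\Phi$ and then descend $L$ to a K\"ahler class on the image.

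\emph{Step 1: the big case $\nu=n$.} Here $L$ is nef and big. By Collins--Tosatti, its non-K\"ahler locus equals its null locus $\bigcup\{V:\int_V L^{\dim V}=0\}$, which is a proper analytic subset. One then has to show that the null locus is covered by $K_X$-negative curves and can be contracted. The tools are the cone theorem for K\"ahler manifolds with K\"ahler boundary (Höring--Peternell in dimension $3$, Das--Hacon--P\u{a}un in general), together with a finite run of the $(K_X+\beta)$-MMP with scaling, which only contracts $L$-trivial extremal rays. This produces a bimeromorphic model $X'$ on which the pushforward of $L$ is nef, big and strictly positive on every curve. The Kähler criterion of Demailly--P\u{a}un then makes it K\"ahler. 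Let $Y$ be the image of $X$ under the contraction induced by $L$, namely the Stein factorisation of the composite map. It is normal, and $L=\Phi^*\gamma$ with $\gamma$ K\"ahler on $Y$.

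\emph{Step 2: the intermediate case $0<\nu<n$.} Here the first task is to produce a fibration. A nef adjoint class that is not big should be ``abundant''. The approach I would take is to run the $(K_X+(1-\epsilon)\beta)$-MMP with scaling; since $\beta$ is K\"ahler, this reduces to an MMP for a klt pair with big boundary. It terminates either in a Mori fibre space or in a model where the class is nef. The $L$-trivial extremal contractions met along the way assemble into a map $X\dashrightarrow Y$ with $\dim Y=\nu$. One then shows that this map is holomorphic on $X$ itself, using rigidity: $L$ restricts trivially to the fibres, and $L$-trivial curves are contracted. Descent of the class goes through the canonical bundle formula in its transcendental form, which writes $L=\Phi^*\gamma$ and shows that $\gamma$ is nef and big on $Y$. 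Applying Step 1 on $Y$, or more carefully a relative version of it, upgrades $\gamma$ to K\"ahler. The case $\nu=0$ is trivial: then $L=0$, since a nef class with $L\cdot\beta^{n-1}=0$ vanishes, and we take $Y$ to be a point.

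\emph{Main obstacle.} I expect the main obstacle to be the non-big abundance in Step 2, that is, showing that a nef class $K_X+\beta$ of intermediate numerical dimension actually comes from a fibration. Without line bundles there is no Kawamata--Viehweg non-vanishing to produce sections. My first attempt would be to argue by induction on dimension, restricting to general fibres of the MRC or Albanese-type fibrations, and to use Păun's version of non-vanishing for transcendental adjoint classes with K\"ahler boundary, so that $L$ is shown to be represented by a semipositive current whose null foliation is algebraically integrable. If that fails, the fallback is to use the K\"ahler--Ricci flow itself: the collapsing limit of $\omega(t)$ as $t\to T$ should define the fibration geometrically. That route would, however, risk circularity with the estimates this paper aims to prove.
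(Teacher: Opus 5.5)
There is a genuine gap. What you have written is a research program rather than a proof, and its load-bearing steps are either unproved or circular. (The paper gives no proof either; it imports the statement from \cite{HX}.)

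Both of your steps run a K\"ahler MMP with scaling in arbitrary dimension, and its ingredients are not available to you independently of the theorem. By the cone theorem, a $K_X$-negative extremal ray $R$ is exposed by a nef class $K_X+\beta'$ with $\beta'$ K\"ahler, and the contraction of $R$ is exactly Theorem~\ref{thm:HX} applied to $\beta'$. After one divisorial contraction or flip, you need the same statement on singular klt spaces.

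This also shows that you have misplaced the obstacle in Step~2. Granting the MMP, when $L$ is not big every step of the $K_X$-MMP with scaling of $\beta$ is automatically $L$-trivial. Indeed, a scaling threshold $\lambda<1$ would make $K_{X_i}+\lambda\beta_i$ nef, and hence $L_i=(K_{X_i}+\lambda\beta_i)+(1-\lambda)\beta_i$ big. The run then ends in a Mori fibre space $X_k\to Z$, so a fibration appears without any non-vanishing.

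The real difficulty comes next. In general $\dim Z>\nu(L)$; for example, take $\mathbb P^1\times\mathbb P^1\times E$ with $L$ pulled back from the elliptic curve $E$, and contract one $\mathbb P^1$ factor. The rest of the face $L^\perp\cap\overline{\mathrm{NA}}(X)$ must then be contracted on $Z$. There the descended class only has the form $K_Z+B_Z+M_Z$ on a singular base, with a transcendental moduli part. Your induction therefore does not close on the statement as formulated (smooth $X$, K\"ahler $\beta$). It needs a transcendental canonical bundle formula and a base-point-free theorem for klt generalized K\"ahler pairs, and neither is supplied.

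Step~1 has a separate missing ingredient. The Demailly--P\u{a}un criterion requires $\int_V L^{\dim V}>0$ for every subvariety $V$. For a nef and big class, positivity on all curves does not imply this in general. What would rescue the adjoint case is the statement that $\mathrm{Null}(L)$ is covered by $L$-trivial, hence $K_X$-negative, curves. You list this as something ``one has to show'', but it is a substantial theorem in its own right and does not follow from Collins--Tosatti alone.

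Finally, your K\"ahler--Ricci flow fallback is circular here. Theorem~\ref{thm:HX} is what produces the semipositive representative $\theta=\Phi^*\theta_Y$ of $\alpha$. The Schwarz estimate of Proposition~\ref{prop:basic} rests on this representative, and so does every later estimate in the paper. The geometric identification of the limit with $Y$ is deferred to the sequel and relies on those estimates.
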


Applying Theorem~\ref{thm:HX} with $\beta=[\omega_0]/(2\pi)$ and
multiplying the resulting class on $Y$ by $2\pi$, we obtain a K\"ahler
form $\theta_Y$ such that
\begin{equation}\label{eq:terminal}
 \theta=\Phi^*\theta_Y\ge0,\qquad [\theta]=\alpha.
\end{equation}
Here a smooth K\"ahler form on $Y$ is understood in the usual sense,
namely as given by local smooth strictly plurisubharmonic potentials in
ambient embeddings; its pullback is a smooth closed semipositive form on
$X$. If $Y$ is a point, we set $\theta=0$.

We choose a smooth positive volume form $\Omega$ on $X$ satisfying
\[
 \Ric(\Omega)=\omega_0-\theta.
\]
This is possible by the $\partial\bar\partial$-lemma. Setting
$\omega_t=(1-t)\omega_0+t\theta$, the flow
\eqref{eq:flow} is equivalent to the complex Monge-Amp\`ere flow
\begin{equation}\label{eq:MA}
 \left\{
 \begin{aligned}
 \frac{\partial\varphi}{\partial t}
   &=\log\frac{(\omega_t+\ddbar\varphi)^n}{\Omega},\\
 \varphi(\cdot,0)&=0,
 \end{aligned}\right.
 \qquad
 \omega(t)=\omega_t+\ddbar\varphi>0.
\end{equation}

\begin{proposition}\label{prop:basic}
There exist constants $c,C>0$ such that, on $X\times[0,1)$,
\begin{equation}\label{eq:C0Schwarz}
 \|\varphi(t)\|_{L^\infty(X)}\le C,\qquad
 0\le \tr_\omega(\theta)\le C,
\end{equation}
and
\begin{equation}\label{eq:basictrace}
 \square \tr_\omega(\theta)\le-c|\partial \tr_\omega(\theta)|_\omega^2+C,
 \qquad R\ge-C.
\end{equation}
\end{proposition}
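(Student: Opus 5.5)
The plan is to prove the three statements in the order $C^0$ bound, trace bound, scalar curvature bound, each by a maximum principle argument on the Monge--Amp\`ere flow \eqref{eq:MA}. The inputs are that $\theta=\Phi^*\theta_Y$ is smooth and semipositive (from \eqref{eq:terminal}) and that $\omega_t=(1-t)\omega_0+t\theta\ge(1-t)\omega_0$.

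\emph{Step 1: $C^0$ bound for $\varphi$.} For the upper bound, at a spatial maximum of $\varphi$ we have $\ddbar\varphi\le0$, so $\omega\le\omega_t\le C\omega_0$. Hence $\partial_t\varphi\le\log(\omega_t^n/\Omega)\le C$ there, and $\varphi\le Ct\le C$. For the lower bound, at a spatial minimum we have $\omega\ge\omega_t\ge(1-t)\omega_0$. Hence
\[
 \partial_t\varphi\ge\log\frac{(1-t)^n\omega_0^n}{\Omega}\ge n\log(1-t)-C,
\]
which is integrable on $[0,1)$, so $\varphi\ge-C$. I also want an upper bound on $\dot\varphi=\partial_t\varphi$ for use in Step 2. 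Using $\square\dot\varphi=\tr_\omega(\theta-\omega_0)$ and $\square\varphi=\dot\varphi-n+\tr_\omega\omega_t$, one computes
\[
 \square\bigl(t\dot\varphi-\varphi-nt\bigr)=-\tr_\omega\omega_0\le0.
\]
The maximum principle then gives $t\dot\varphi\le C$, hence $\dot\varphi\le C$ for $t\ge1/2$. For $t\le1/2$ the bound $\dot\varphi\le C$ is immediate, since the flow is smooth there.

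\emph{Step 2: bound on $Q=\tr_\omega(\theta)$.} This is the main obstacle, because $Y$ may be singular. I would use the parabolic Chern--Lu (Schwarz lemma) inequality for the holomorphic map $\Phi$. Locally on $Y$, $\theta_Y$ is the restriction of a smooth K\"ahler form on an open set of $\mathbb C^N$ under an ambient embedding. By compactness of $Y$, finitely many such charts suffice, and the ambient forms have bisectional curvature bounded above by a uniform constant. The Chern--Lu computation only uses the holomorphic map $X\supset\Phi^{-1}(V)\to\mathbb C^N$ and the ambient curvature, and bisectional curvature of a complex subvariety only decreases relative to the ambient one. This yields, wherever $Q>0$,
\[
 \square\log Q\le C\,Q .
\]
Then take $H=\log Q-A\varphi$. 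We have $\square(-A\varphi)=-A\dot\varphi+An-A\tr_\omega\omega_t$, together with $\tr_\omega\omega_t\ge t\,Q$, $\dot\varphi\le C$ from Step 1, and $t\ge1/2$. Choosing $A$ large, this gives $\square H\le -Q+C$ at a maximum of $H$. So $Q\le C$ there, and since $\varphi$ is bounded, $Q\le C$ everywhere.

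\emph{Step 3: the gradient term and $R\ge -C$.} The sharper form of Chern--Lu keeps the gradient term:
\[
 \square Q\le C\,Q^2-\frac{|\partial Q|_\omega^2}{Q}.
\]
Since $0\le Q\le C$, this gives $\square Q\le -c|\partial Q|_\omega^2+C$ with $c=1/\sup Q$. The set $\{Q=0\}$ causes no harm, since there $Q$ attains its minimum and the inequality holds in the barrier sense. Finally, under the flow $\square R=|\Ric|^2\ge R^2/n\ge0$, so the minimum principle gives $R\ge\min_X R_{g_0}\ge-C$.
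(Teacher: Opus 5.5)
\textbf{Gap in Step~2: the wrong-sided bound on $\dot\varphi$.}
Your $C^0$ estimate is fine, and so is your identity $\square(t\dot\varphi-\varphi-nt)=-\tr_\omega\omega_0$. The Chern--Lu inequalities, the gradient term obtained from $\square Q\le CQ^2-|\partial Q|_\omega^2/Q$, and $R\ge-C$ also essentially agree with the paper.

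The problem is the sign of $\dot\varphi$. Since $\square(-A\varphi)=-A\dot\varphi+An-A\tr_\omega\omega_t$, reaching $\square H\le-Q+C$ at a maximum of $H=\log Q-A\varphi$ requires $-A\dot\varphi\le C$. That is a \emph{lower} bound $\dot\varphi\ge-C$. The upper bound $\dot\varphi\le C$ from Step~1 does not help here.

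A uniform lower bound is false in general, because $\dot\varphi=\log(\omega^n/\Omega)$. For instance, if $\alpha^n=0$, then
$\int_Xe^{\dot\varphi}\Omega=\int_X\omega(t)^n\to\alpha^n=0$, so $\min_X\dot\varphi\to-\infty$. Likewise, $\dot\varphi\to-\infty$ near the exceptional locus of a contraction. The lower bound that does hold in general is $\dot\varphi\ge-C/(1-t)$. Inserted into your maximum principle, it only gives $Q\le C/(1-t)$, not the uniform bound claimed.

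\textbf{The missing idea.}
You need an auxiliary function whose heat operator has no $\dot\varphi$ term. The paper uses $H=(1-t)\dot\varphi+\varphi$. From your formulas $\square\dot\varphi=\tr_\omega(\theta-\omega_0)$ and $\square\varphi=\dot\varphi-n+\tr_\omega\omega_t$, together with $\ddot\varphi=-R$, one gets
$\square H=\tr_\omega(\theta)-n$ and $\partial_tH=-(1-t)R$.

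So prove $R\ge-C$ first. It gives $H\le C$. The minimum principle with $\tr_\omega(\theta)\ge0$ gives $H\ge-C$. Then
$\square(\log Q-AH)\le-(A-C)Q+An$, and the maximum principle yields $Q\le C$. With $H$ in place of $\varphi$, the rest of your argument goes through unchanged.
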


\begin{proof}
The background forms satisfy
$(1-t)\omega_0\le\omega_t\le C\omega_0$. Hence \eqref{eq:MA} gives
$\partial_t\varphi\le C$ at a spatial maximum of $\varphi$, and
$\partial_t\varphi\ge n\log(1-t)-C$ at a spatial minimum.
The maximum principle therefore yields
\[
 \int_0^t\bigl(n\log(1-s)-C\bigr)\,ds
 \le\varphi(x,t)\le Ct.
\]
Since $\log(1-s)$ is integrable on $[0,1)$, the first estimate follows.
The evolution equation $\square R=|\Ric(\omega)|_\omega^2$ for the scalar
curvature gives $R\ge-C$.

To prove the Schwarz estimate, we set $H=(1-t)\dot\varphi+\varphi$.
Differentiating \eqref{eq:MA} gives
\[
 H_t=-(1-t)R,\qquad \square H=\tr_\omega(\theta)-n.
\]
The lower bound for the scalar curvature gives $H\le C$, while
$\tr_\omega(\theta)\ge0$ and the minimum principle give $H\ge-C$.

Consider a local embedding of $Y$ into a complex Euclidean space, and
extend a local potential for $\theta_Y$ to a smooth strictly
plurisubharmonic function. The resulting ambient K\"ahler metric has
bounded curvature on a smaller chart, and the parabolic Schwarz
calculation for the holomorphic map obtained by composing $\Phi$ with
this embedding yields
\begin{equation}\label{eq:Schwarzlocal}
 \begin{aligned}
 \square\log\tr_\omega(\theta)&\le C\tr_\omega(\theta)
       &&\text{where }\tr_\omega(\theta)>0,\\
 \square\tr_\omega(\theta)&\le-c_0|\nabla d\Phi|^2
       +C\bigl(\tr_\omega(\theta)\bigr)^2.
 \end{aligned}
\end{equation}
Here the second covariant derivative is computed with respect to the
chosen ambient metric. By the compactness of $Y$, finitely many such
charts suffice, with uniform constants. Thus the first inequality holds
as a global pointwise inequality wherever $\tr_\omega(\theta)>0$.

For a fixed sufficiently large constant $A$, we have
\[
 \square(\log \tr_\omega(\theta)-AH)\le-(A-C)\tr_\omega(\theta)+An.
\]
The maximum principle and the bound $|H|\le C$ then imply
$\tr_\omega(\theta)\le C$. Here the argument is applied at positive
maxima of $\tr_\omega(\theta)$, or else to a positive regularization;
when $\Phi$ is constant, the conclusion is immediate.
On each ambient chart, we also have
$|\partial \tr_\omega(\theta)|^2\le C\tr_\omega(\theta)|\nabla d\Phi|^2$.
Together with the bound for $\tr_\omega(\theta)$, the second inequality
in \eqref{eq:Schwarzlocal} now gives the first inequality in
\eqref{eq:basictrace}. These are the local Schwarz calculations used in
\cite{JST}, and they require only the local ambient description of
$\theta_Y$.
\end{proof}

\subsection{Nash entropy and conjugate heat kernel}\label{subsec:heat}

The conjugate heat kernel based at $p=(x,t)$ is the positive function
$K(x,t;y,s)$, defined for $s<t$, which satisfies
\[
 \bigl(-\partial_s-\Delta_{g(s)}+R(\cdot,s)\bigr)
 K(x,t;\cdot,s)=0.
\]
The associated measures
\[
 d\nu_{x,t;s}(y)=K(x,t;y,s)\,dV_{g(s)}(y)
\]
are probability measures, and they converge weakly to $\delta_x$ as
$s\nearrow t$.
Since $\partial_s dV_{g(s)}=-R\,dV_{g(s)}$, an integration by parts
gives the fundamental duality identity
\begin{equation}\label{eq:duality}
 \frac{d}{ds}\int_X F(\cdot,s)\,d\nu_{x,t;s}
 =\int_X\square F(\cdot,s)\,d\nu_{x,t;s}
\end{equation}
for every smooth spacetime function $F$.

For $0\le s<t$, we write $\tau=t-s$ and define the heat potential by
\[
 d\nu_{x,t;s}=(4\pi\tau)^{-n}e^{-f_{p,\tau}}dV_{g(s)}.
\]
The pointed Nash entropy is then defined by
\begin{equation}\label{eq:Nashdef}
 \Nash_p(\tau)=\int_X f_{p,\tau}\,d\nu_{x,t;s}-n.
\end{equation}
Recall that the real dimension is $2n$; with this convention,
Perelman's pointed $\mathcal W$-entropy is
\[
 \mathcal W_p(\tau)=\int_X
 \bigl(\tau(|df_{p,\tau}|^2+R)+f_{p,\tau}-2n\bigr)\,d\nu_{x,t;s}.
\]
In these formulas, the metric and the curvature are evaluated at
$s=t-\tau$. For a fixed base point $p$, Perelman's entropy identities
\cite{Perelman} read
\begin{align}
 \frac{d}{d\tau}\bigl(\tau\Nash_p(\tau)\bigr)&=\mathcal W_p(\tau),
       \label{eq:NashW}\\
 \frac{d}{d\tau}\mathcal W_p(\tau)
 &=-2\tau\int_X\left|\Ric_g+\nabla^2 f_{p,\tau}
                   -\frac{g}{2\tau}\right|^2d\nu_{x,t;s}.
       \label{eq:Wderivative}
\end{align}

\begin{lemma}\label{lem:entropy}
There exists a constant $Y<\infty$ such that
\begin{equation}\label{eq:entropy}
 -Y\le\Nash_p(\tau)\le0,\qquad 0<\tau\le t<1.
\end{equation}
Moreover, for each fixed base point $p$, the function
$\tau\mapsto\Nash_p(\tau)$ is nonincreasing.
\end{lemma}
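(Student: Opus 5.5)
The plan is to derive all three assertions from Perelman's identities \eqref{eq:NashW}--\eqref{eq:Wderivative} together with the short-time asymptotics of the heat kernel. The lower bound $-Y$ will come from the lower bound for Perelman's $\mu$-functional on the compact time interval, made uniform by the scalar curvature bound $R\ge-C$ of Proposition~\ref{prop:basic}.

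\emph{Monotonicity and the upper bound.} By \eqref{eq:Wderivative}, $\tau\mapsto\mathcal W_p(\tau)$ is nonincreasing. The standard short-time analysis of the conjugate heat kernel near its base point gives $\mathcal W_p(\tau)\to0$ and $\Nash_p(\tau)\to0$ as $\tau\searrow0$; the flow is smooth on $X\times[0,t]$, so the Euclidean Gaussian asymptotics apply. Hence $\mathcal W_p\le0$. From \eqref{eq:NashW} we can write
\[
 \tau\Nash_p(\tau)=\int_0^\tau\mathcal W_p(\sigma)\,d\sigma ,
\]
so $\Nash_p(\tau)$ is the average of $\mathcal W_p$ over $[0,\tau]$. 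This gives $\Nash_p\le0$ immediately. Since $\mathcal W_p$ is nonincreasing, its average over $[0,\tau]$ is nonincreasing in $\tau$. Concretely,
\[
 \frac{d}{d\tau}\Nash_p=\frac{\mathcal W_p(\tau)-\Nash_p(\tau)}{\tau}\le0 .
\]

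\emph{Lower bound.} Because $\Nash_p(\tau)\ge\mathcal W_p(\tau)$, it suffices to bound $\mathcal W_p(\tau)$ from below. The function $f_{p,\tau}$ is an admissible test function for Perelman's functional $\mathcal W(g(s),\cdot,\tau)$ at $s=t-\tau$, so $\mathcal W_p(\tau)\ge\mu(g(s),\tau)$. By Perelman's monotonicity of $\mu$ along the flow with $\tau$ decreasing to the time $0$ level, we get
\[
 \mu(g(s),\tau)\ge\mu(g_0,\tau+s)=\mu(g_0,t),\qquad t<1.
\]
Thus it remains to show $\inf_{\sigma\in(0,1]}\mu(g_0,\sigma)>-\infty$, with $g_0$ the rescaled initial metric. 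On a fixed compact manifold, $\mu(g_0,\sigma)$ is continuous in $\sigma>0$ and tends to $0$ as $\sigma\to0$, via the log-Sobolev inequality for $g_0$. Therefore it is bounded below on $(0,1]$, and we may set $Y=-\inf_{(0,1]}\mu(g_0,\cdot)$.

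\emph{Main obstacle.} The main delicate point is justifying the limits $\mathcal W_p,\Nash_p\to0$ as $\tau\to0$ and the boundedness of $\mu(g_0,\cdot)$ near $\sigma=0$. Both are standard consequences of smoothness on a compact time interval: for the first, use heat kernel asymptotics on $X\times[0,t]$; for the second, use the Euclidean log-Sobolev inequality at small scales (as in Perelman and Bamler \cite{BamHeat}). I would quote these rather than reprove them. No uniformity in $t\to1$ is needed beyond the monotonicity reduction to $g_0$, which is exactly what keeps $Y$ independent of $p$.
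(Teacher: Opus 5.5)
Your proposal is correct and follows essentially the same route as the paper. The small-time asymptotics together with \eqref{eq:NashW}--\eqref{eq:Wderivative} exhibit $\Nash_p(\tau)$ as the average of the nonincreasing, nonpositive function $\mathcal W_p$. The lower bound then comes from $\mathcal W_p(\tau)\ge\mu(g(t-\tau),\tau)\ge\mu(g_0,t)$ and a uniform lower bound for $\mu(g_0,\cdot)$ on $(0,1]$; the bound $R\ge-C$ that you invoke at the outset is not actually needed.
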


\begin{proof}
By the small-time asymptotics, we have
$\mathcal W_p(\tau)\to0$ and $\Nash_p(\tau)\to0$ as $\tau\searrow0$.
Equations \eqref{eq:NashW}-\eqref{eq:Wderivative} then imply
\[
 \Nash_p(\tau)=\frac1\tau\int_0^\tau\mathcal W_p(a)\,da,
 \qquad \mathcal W_p(\tau)\le\Nash_p(\tau)\le0,
\]
which proves the monotonicity. If $\mu(g,\tau)$ denotes the infimum of
Perelman's $\mathcal W$-functional over probability densities, then
\[
 \mathcal W_p(\tau)\ge\mu(g(t-\tau),\tau)\ge\mu(g(0),t).
\]
Since the initial metric is smooth and fixed, it has a uniform lower
entropy bound for $0<t\le1$, and averaging in $\tau$ proves
\eqref{eq:entropy}.
\end{proof}

Bamler's work \cite{BamHeat} provides heat-kernel and entropy estimates
whose constants are controlled by the dimension, a lower bound for the
scalar curvature, and the pointed entropy. His compactness theory
\cite{BamCompact} compares metric flows by means of their conjugate heat
measures, while his structure theory \cite{BamStructure} describes the
regular and singular parts of noncollapsed limits. In particular, a
small drop of the entropy yields shrinking limits, and one can pass to
smooth limits on their regular spacetime. The precise form needed here
is stated in Section~\ref{sec:limits}.

We next introduce the scalar heat evolution used in the estimates below.
For times $a<b$ in the interval of a smooth flow, let $\Phi_{a,b}$
denote its heat solution operator. Thus, for $F\in C^\infty(X)$, the
function $v(\cdot,s)=\Phi_{a,s}F$ is the unique smooth solution of
\[
 \square v=0\quad\hbox{on }X\times[a,b],\qquad v(\cdot,a)=F.
\]
It admits the representation
\[
 (\Phi_{a,b}F)(x)=\int_X F(y)\,d\nu_{x,b;a}(y).
\]
These operators preserve constants and positivity, and satisfy
$\Phi_{b,c}\Phi_{a,b}=\Phi_{a,c}$. If $\square v=G$, then Duhamel's
principle gives
\begin{equation}\label{eq:DuhamelScalar}
 v(b)=\Phi_{a,b}v(a)+\int_a^b\Phi_{s,b}G(s)\,ds.
\end{equation}
We shall use the same notation after time has been shifted and rescaled.

\begin{lemma}\label{lem:heat}
For $F\in C^\infty(X)$ and $a<b$ in the interval of the flow,
\begin{equation}\label{eq:heatgradient}
 |d\Phi_{a,b}F|\le \frac{C}{\sqrt{b-a}}\|F\|_\infty.
\end{equation}
For a fixed base point at time $0$, write $\nu_s=\nu_{x,0;s}$.
Then every nonnegative heat subsolution satisfies
\begin{equation}\label{eq:hyper}
 \|F(b)\|_{L^q(\nu_b)}\le\|F(a)\|_{L^p(\nu_a)}
 \quad\text{if}\quad
 a<b<0,\quad 1<p\le q,\quad\frac{|a|}{|b|}\ge\frac{q-1}{p-1}.
\end{equation}
In particular, the evolution from time $-8$ to time $-1$ maps $L^2$
into $L^8$.
\end{lemma}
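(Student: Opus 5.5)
We prove the two estimates separately. The gradient bound \eqref{eq:heatgradient} is a scale-invariant Bernstein-type estimate. The hypercontractivity \eqref{eq:hyper} is the standard consequence of the log-Sobolev inequality for conjugate heat measures, in the form proved by Bamler \cite{BamHeat} and Hein-Naber. The last sentence follows by choosing exponents.

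For \eqref{eq:heatgradient}, we may assume $\|F\|_\infty\le1$. Set $v=\Phi_{a,s}F$, so $|v|\le1$ by the maximum principle, and $\sigma=s-a$. Under the convention \eqref{eq:conventions}, the Bochner formula along Ricci flow gives
\[
\square|dv|^2=-|\nabla^2v|^2\le0
\]
(up to the normalization constants; the Ricci terms cancel exactly). Together with $\square v^2=-2|dv|^2$, this gives
\[
\square\bigl(\sigma|dv|^2+v^2\bigr)\le|dv|^2-2|dv|^2\le0.
\]
The maximum principle then yields $\sigma|dv|^2\le\sup v^2(a)\le1$ at $s=b$. This gives the bound with a universal constant $C$, so no curvature assumption is needed. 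The only care required is the factor $2$ coming from $g=2\omega$, which we absorb into $C$.

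For \eqref{eq:hyper}, we use Bamler's Poincaré and log-Sobolev inequality for $\nu_s=\nu_{x,0;s}$ (valid on any smooth Ricci flow, with no curvature hypothesis):
\[
\int u^2\log u^2\,d\nu_s-\Bigl(\int u^2d\nu_s\Bigr)\log\int u^2d\nu_s\le 4|s|\int|du|^2d\nu_s .
\]
Let $F\ge0$ with $\square F\le0$, and choose an increasing exponent $r(s)$ with $r(a)=p$. Differentiate $\log\|F\|_{L^{r(s)}(\nu_s)}$ using the duality identity \eqref{eq:duality}. The heat term contributes
\[
\int\square(F^r)\,d\nu_s\le -r(r-1)\int F^{r-2}|dF|^2\,d\nu_s .
\]
The $r'$ term produces an entropy of $F^r$, which the log-Sobolev inequality with $u=F^{r/2}$ bounds by $|s|\,r^2\int F^{r-2}|dF|^2\,d\nu_s$. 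The derivative is therefore nonpositive provided
\[
r'|s|\le r-1 .
\]
The extremal choice is $(r-1)|s|=(p-1)|a|$, i.e. $r-1=(p-1)|a|/|s|$. Hence $q$ is admissible whenever $q-1\le(p-1)|a|/|b|$, and for smaller $q$ the norm is monotone in $q$ because $\nu_b$ is a probability measure. The main obstacle is bookkeeping the normalization: the log-Sobolev constant depends on the real-metric convention ($\tau$ versus $2\tau$ in \eqref{eq:conventions}). We must check that the constant matches exactly the stated threshold $(q-1)/(p-1)$. We should also justify differentiating under the integral, by working with $F+\epsilon$ to avoid zeros.

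Finally, with $a=-8$, $b=-1$ and $p=2$, the condition reads $8\ge q-1$. So $q=8$ is allowed (indeed any $q\le9$), which gives the claimed $L^2\to L^8$ bound.
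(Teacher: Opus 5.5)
Your proposal is correct and matches the paper. The gradient bound is the same Bernstein-type subsolution plus maximum principle argument; the paper uses $(s-a)|dv|^2+v^2/2$ with the exact identity $\square|dv|^2=-2|\nabla dv|^2$ under \eqref{eq:conventions}. For \eqref{eq:hyper} the paper simply cites Bamler's hypercontractivity theorem and handles subsolutions by comparison with the heat solution having the same initial data, whereas you unpack that theorem through Gross's argument from the sharp log-Sobolev inequality and use $\square F\le0$ directly. Since $g$ is a genuine Ricci flow with $\Delta=\Delta_g$ under \eqref{eq:conventions}, the log-Sobolev constant is exactly $4|s|$, so your condition $r'|s|\le r-1$ reproduces the stated threshold $|a|/|b|\ge(q-1)/(p-1)$ with no further bookkeeping needed.
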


\begin{proof}
For $v=\Phi_{a,s}F$, the Bochner formulas for the real Ricci flow read
\[
 \square v^2=-2|dv|^2,\qquad
 \square|dv|^2=-2|\nabla dv|^2.
\]
Thus $(s-a)|dv|^2+v^2/2$ is a subsolution, and the maximum principle
proves \eqref{eq:heatgradient}. Estimate \eqref{eq:hyper} is Bamler's
heat hypercontractivity theorem \cite[Theorem 12.1]{BamHeat}, applied by
comparison when $F$ is a subsolution; it requires no upper bound on the
curvature. Finally, taking $a=-8$, $b=-1$, $p=2$ and $q=8$ is
admissible since $|a|/|b|=8\ge(q-1)/(p-1)=7$.
\end{proof}

\subsection{Hodge heat flow}\label{subsec:forms}

Let $d$ be the exterior derivative, and let $\delta=\delta_{g(t)}$ be
its formal adjoint with respect to $dV_{g(t)}$. The nonnegative Hodge
Laplacian on differential forms is
\[
 \Delta_d=d\delta+\delta d.
\]
On functions, $\Delta_d=-\Delta_g$. Let $\nabla$ be the Levi-Civita
connection of $g$, and let $\nabla^*$ be its formal adjoint with respect
to $dV_g$ at each fixed time. For a one-form $\beta$, we define
\[
 (\Delta^{\mathrm{rough}}\beta)_j
 =g^{ab}\nabla_a\nabla_b\beta_j,\qquad
 \Delta^{\mathrm{rough}}=-\nabla^*\nabla,
\]
where the indices refer to real coordinates.

We consider the time-dependent Hodge heat equation
\begin{equation}\label{eq:HodgeEquation}
 \partial_t\beta=-\Delta_{d,g(t)}\beta.
\end{equation}
For smooth initial data, this linear parabolic equation has a unique
smooth solution on every compact time interval contained in $[0,1)$,
and we denote its propagator by $\HH_{a,b}$.

We first recall a weighted version of the Hodge decomposition. At a
fixed time, let $d\nu=ce^{-f}dV_g$ be a smooth positive probability
measure. The weighted adjoint of $d$ is then
\[
 \delta_f=e^f\delta e^{-f}=\delta+\iota_{\nabla f}.
\]
We let $\Z$ denote the $L^2(\nu,g)$ closure of the space of smooth
closed real one-forms.

\begin{lemma}\label{lem:projection}
A smooth real one-form $\alpha$ admits a unique orthogonal decomposition
\begin{equation}\label{eq:projection}
 \alpha=\zeta+\beta,\qquad \zeta\in\Z,\quad\beta\perp\Z.
\end{equation}
Both $\zeta$ and $\beta$ are smooth, and they satisfy  $$d\zeta=0, ~\delta_f\beta=0. $$
Equivalently, $\beta$ minimizes $\int|\widetilde\beta|^2d\nu$ among
real forms $\widetilde\beta$ satisfying $d\widetilde\beta=d\alpha$.
\end{lemma}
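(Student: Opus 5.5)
The decomposition itself is Hilbert-space projection; the real content is the smoothness of both pieces, which I would get from weighted Hodge theory on the compact manifold $X$. Work in the Hilbert space $L^2(\nu,g)$ of real one-forms. Since $\nu=ce^{-f}dV_g$ has smooth positive density, this space coincides with the unweighted $L^2$ as a set, with an equivalent norm. The subspace $\Z$ is closed by definition. Orthogonal projection onto $\Z$ therefore gives a unique decomposition $\alpha=\zeta+\beta$ with $\zeta\in\Z$ and $\beta\perp\Z$, in the $L^2$ sense.

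\emph{Identifying the two pieces.} First, $d\zeta=0$ weakly: closedness is preserved under $L^2$ limits when tested against smooth compactly supported two-forms, via $\delta$. Next, $\beta$ is orthogonal to all exact forms $du$ with $u\in C^\infty(X)$, since these lie in $\Z$. Hence
\[
\int\langle\beta,du\rangle\,d\nu=0\quad\text{for all }u,
\]
which says exactly that $\delta_f\beta=0$ weakly. Also $d\beta=d\alpha$ weakly, because $d\zeta=0$. The minimization characterization then follows directly. Any $\widetilde\beta$ with $d\widetilde\beta=d\alpha$ satisfies $\alpha-\widetilde\beta\in\Z$, at least when $\widetilde\beta$ is smooth, or in the closure for the $L^2$ version. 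So $\widetilde\beta$ differs from $\beta$ by an element of $\Z$, and Pythagoras gives $\int|\widetilde\beta|^2d\nu\ge\int|\beta|^2d\nu$, with equality only when $\widetilde\beta=\beta$.

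\emph{Smoothness.} This is the main step. The form $\beta$ satisfies the first-order system
\[
d\beta=d\alpha,\qquad \delta_f\beta=\delta\beta+\iota_{\nabla f}\beta=0.
\]
Here $d\alpha$ is smooth, and $d+\delta$ is elliptic with the zero-order perturbation $\iota_{\nabla f}$ having smooth coefficients. So $\beta$ is a weak $L^2$ solution of an elliptic system with smooth data. Concretely, applying the weighted Laplacian $d\delta_f+\delta_f d$ to $\beta$ yields $\delta_f d\alpha$, which is smooth. Elliptic regularity, via difference quotients or the standard distributional argument on compact $X$, then gives $\beta\in C^\infty$, and consequently $\zeta=\alpha-\beta$ is smooth as well. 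The one technical point to check is that the weak equations really hold distributionally for the operator $d\delta_f+\delta_f d$. This reduces to testing: against exact forms, which is orthogonality, and against co-closed test forms, which is the relation $d\beta=d\alpha$. Since the measure is smooth and positive, no degeneration occurs, so I do not expect any serious obstacle here.

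An alternative route I would keep in reserve is to solve the problem explicitly. Take $\beta=\alpha-du-h$, where $u$ solves the weighted Poisson equation $\delta_f du=\delta_f\alpha$ (solvable because $\int\delta_f\alpha\,d\nu=0$), and $h$ is the $\nu$-orthogonal projection of $\alpha-du$ onto the finite-dimensional space of weighted-harmonic representatives of $H^1$. Both pieces are then visibly smooth. Uniqueness still comes from the Hilbert projection.
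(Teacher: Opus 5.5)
Your proof is correct, but your main route runs in the opposite order from the paper's.

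The paper starts from the weighted Hodge theory of the elliptic operator $L_f=d\delta_f+\delta_f d$, using the projection $\Pi$ onto its kernel and the Green operator $\mathcal G$. It writes the pieces down explicitly as $\zeta=\Pi\alpha+d\delta_f\mathcal G\alpha$ and $\beta=\delta_f d\mathcal G\alpha$. Smoothness is then automatic, and $\zeta$ is visibly a smooth closed form, so it lies in $\Z$. The relation $\beta\perp\Z$ is a one-line integration by parts using $\delta_f^2=0$. The Hilbert-space structure enters only for uniqueness and the minimizing characterization.

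You take the abstract orthogonal projection first, so existence and uniqueness come for free. You then have to upgrade the weak relations $d\zeta=0$, $\delta_f\beta=0$ to smoothness. This requires checking that $L_f\beta=\delta_f d\alpha$ holds distributionally. Testing against $d\delta_f\phi$ uses orthogonality to exact forms. Testing against $\delta_f d\phi$ uses the weak closedness of $\zeta$, obtained as an $L^2(\nu)$ limit of smooth closed forms. Elliptic regularity for $L^2$ distributional solutions then finishes the argument. Your version replaces the global Fredholm input (finite-dimensional kernel, Green operator) with local elliptic regularity plus this weak-formulation check. The paper's version is shorter because it produces the smooth representatives outright.

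Your reserve route is in fact exactly the paper's decomposition, computed in two steps. The function $u=\delta_f\mathcal G\alpha$ solves your weighted Poisson equation. Your $h$ equals $\Pi\alpha$, because exact forms are $\nu$-orthogonal to the kernel of $L_f$. To complete that route you would still need to check $\beta\perp\Z$. That check uses the fact that every smooth closed form is weighted-harmonic plus exact, which is the same Hodge input the paper relies on.
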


\begin{proof}
The weighted Hodge Laplacian $L_f=d\delta_f+\delta_fd$ is elliptic and
self-adjoint on the compact manifold. Let $\Pi$ denote the projection
onto its kernel, and $\mathcal G$ its Green operator. The usual Hodge
decomposition then reads
\[
 \zeta=\Pi\alpha+d\delta_f\mathcal G\alpha,
 \qquad \beta=\delta_fd\mathcal G\alpha.
\]
Smoothness follows from elliptic regularity. Since the quadratic
form of $L_f$ is $\|d\cdot\|_2^2+\|\delta_f\cdot\|_2^2$, the form
$\Pi\alpha$ is closed. Moreover, $\delta_f^2=0$, and for every smooth
closed $\gamma$,
\[
 \int\ip{\beta}{\gamma}\,d\nu
 =\int\ip{d\mathcal G\alpha}{d\gamma}\,d\nu=0.
\]
This proves \eqref{eq:projection}. The minimizing characterization
follows, since $d\widetilde\beta=d\alpha$ if and only if
$\alpha-\widetilde\beta$ is closed.
\end{proof}

\begin{lemma}\label{lem:hodgeheat}
The propagator $\HH_{a,b}$ of the Hodge heat equation preserves real
forms and closed forms, and satisfies
\begin{align}
 &\HH_{a,b}(\dc v)=\dc \Phi_{a,b}v.\label{eq:commute}
\end{align}
 Moreover, if we let $\beta$ be the solution of  $$\partial_t\beta=-\Delta_d\beta. $$
Then
 \begin{align}
 &\square|\beta|^2=-2|\nabla \beta|^2,\label{eq:hodgebochner}\\
 &\int|\beta(t)|^2d\nu_t+2\int_a^t\!\int|\nabla \beta|^2d\nu_s\,ds
 =\int|\beta(a)|^2d\nu_a.\label{eq:hodgeenergy}
\end{align}
Similarly, for a complex $(0,1)$-form $\eta$ solving
$\partial_t\eta=-\Delta_{\bar\partial,\omega(t)}\eta$, where
$\Delta_{\bar\partial,\omega}$ is the nonnegative Dolbeault Laplacian
of the Hermitian metric $\omega$, we have
\begin{equation}\label{eq:complexheat}
 \square|\eta|^2=-|\nabla^{1,0}\eta|^2-|\nabla^{0,1}\eta|^2.
\end{equation}
\end{lemma}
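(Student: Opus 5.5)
The plan is to verify each item by direct computation, using three facts: along the Kähler-Ricci flow the complex structure $J$ is fixed, $d$ commutes with the time derivative, and the Hodge Laplacian on one-forms satisfies the Weitzenböck formula $\Delta_d=\nabla^*\nabla+\Ric$.

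\textbf{Real and closed forms.} Since $\Delta_{d,g(t)}$ is a real operator, $\HH_{a,b}$ preserves real forms. For closedness, if $\beta$ solves \eqref{eq:HodgeEquation}, then $\gamma=d\beta$ satisfies
\[
\partial_t\gamma=-d\delta d\beta=-\Delta_d\gamma,
\]
so $\gamma$ is a solution of the Hodge heat equation on two-forms. If $\gamma(a)=0$, uniqueness for this linear parabolic system gives $\gamma\equiv0$.

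\textbf{The commutation identity \eqref{eq:commute}.} Set $v=\Phi_{a,s}F$, so $\partial_tv=\Delta_g v$. I will check that $\dc v$ solves \eqref{eq:HodgeEquation}; uniqueness then gives \eqref{eq:commute}.
\begin{itemize}
\item Since $J$ is fixed, $\partial_t\dc v=\dc\partial_t v=\dc\Delta_g v$.
\item Because $\omega(t)$ is Kähler, the Kähler identities give $\Delta_d\dc=\dc\Delta_d$ on functions.
\item On functions $\Delta_d=-\Delta_g$, so $\dc\Delta_g v=-\Delta_d\dc v$.
\end{itemize}
One point needs care: whether $\Delta_g=\omega^{i\bar j}\partial_i\partial_{\bar j}$ agrees with the real Laplacian of $g$ under the normalization $g=2\omega(\cdot,J\cdot)$. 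If the two differ by a constant factor, the same factor appears on both sides, so \eqref{eq:commute} is unaffected as long as $\Phi$ and $\HH$ are taken with respect to the same flow normalization. I will fix this factor once at the start.

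\textbf{The Bochner identity \eqref{eq:hodgebochner}.} By Weitzenböck, $\partial_t\beta=\Delta^{\mathrm{rough}}\beta-\Ric(\beta^\sharp,\cdot)$. Write $|\beta|^2=g^{ij}\beta_i\beta_j$. Since $\partial_tg^{ij}=2R^{ij}$,
\[
\partial_t|\beta|^2=2\Ric(\beta^\sharp,\beta^\sharp)+2\ip{\beta}{\Delta^{\mathrm{rough}}\beta}-2\Ric(\beta^\sharp,\beta^\sharp).
\]
The Ricci terms cancel. This cancellation is the main point to get right: it needs the metric variation and the Weitzenböck curvature term to carry the same normalization, and \eqref{eq:conventions} is set up so that they do. Using $\Delta|\beta|^2=2\ip{\beta}{\Delta^{\mathrm{rough}}\beta}+2|\nabla\beta|^2$, we get $\square|\beta|^2=-2|\nabla\beta|^2$, again modulo the same Laplacian normalization.

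\textbf{The energy identity \eqref{eq:hodgeenergy}.} Integrate \eqref{eq:hodgebochner} against $d\nu_s$ and apply the duality identity \eqref{eq:duality} with $F=|\beta|^2$:
\[
\frac{d}{ds}\int|\beta|^2d\nu_s=-2\int|\nabla\beta|^2d\nu_s.
\]
Integrating from $a$ to $t$ gives \eqref{eq:hodgeenergy}.

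\textbf{The complex identity \eqref{eq:complexheat}.} The argument is the same with the Bochner–Kodaira formula for $(0,1)$-forms:
\[
\Delta_{\bar\partial}\eta=-g^{i\bar j}\nabla_i\nabla_{\bar j}\eta+\Ric\cdot\eta.
\]
Under the flow, $\partial_t\omega^{i\bar j}=R^{i\bar j}$, and this again cancels the Ricci term. Computing the Laplacian of $|\eta|^2$ in complex coordinates produces the two gradient terms $|\nabla^{1,0}\eta|^2$ and $|\nabla^{0,1}\eta|^2$. Commuting $\nabla_i$ and $\nabla_{\bar j}$ on a $(0,1)$-form produces exactly the Ricci term, and I expect that term to match the one in Bochner–Kodaira, so that nothing is left over. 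The step I expect to be the main obstacle is this matching of the curvature terms and of the factor-of-2 conventions.
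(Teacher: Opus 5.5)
Your proposal is correct and follows essentially the same route as the paper. Both use real coefficients, uniqueness for $d\beta$ as a degree-two Hodge heat solution, fixed $J$ together with the K\"ahler identities for \eqref{eq:commute}, the Weitzenb\"ock formula with the inverse-metric variation cancelling $\Ric_g^{\#}\beta$, heat duality for \eqref{eq:hodgeenergy}, and the analogous Bochner--Kodaira computation for \eqref{eq:complexheat}.

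The normalizations you flag do close up. With $g=2\omega(\cdot,J\cdot)$, the Laplace--Beltrami operator of $g$ is exactly $\omega^{i\bar j}\partial_i\partial_{\bar j}$. In the $(0,1)$ case, the term $R^{i\bar j}\overline{\eta_{\bar i}}\eta_{\bar j}$ from $\partial_t\omega^{i\bar j}$ and the commutator term in $\Delta|\eta|^2$ together cancel the $-2R^{i\bar j}\overline{\eta_{\bar i}}\eta_{\bar j}$ coming from $2\operatorname{Re}\ip{\partial_t\eta}{\eta}$.
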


\begin{proof}
The equation has real coefficients, so it preserves real forms. As
$d\Delta_d=\Delta_dd$ at each time and $d$ is independent of time,
$d\beta$ solves the Hodge heat equation in degree two, and uniqueness
shows that the condition $d\beta=0$ is preserved. The K\"ahler
identities and the fact that the complex structure is fixed give
\eqref{eq:commute}; indeed, under \eqref{eq:conventions},
$\Delta_{d,g}=\Delta_{\bar\partial,\omega}$ on complexified forms.

By the Weitzenb\"ock formula, the equation can be written as
$\partial_t\beta=\Delta^{\rm rough}\beta-\Ric_g^{\#}\beta$. In
$\partial_t|\beta|^2$, the variation of the inverse metric contributes
$2\Ric_g(\beta,\beta)$, which exactly cancels the zero-order term coming
from the equation. This proves \eqref{eq:hodgebochner}, and
\eqref{eq:hodgeenergy} then follows from \eqref{eq:duality}. The same
calculation for complex $(0,1)$-forms gives \eqref{eq:complexheat};
compare \cite[Lemma 3.1(i)]{Hallgren}. Finally, applying the chain rule
to $(|\beta|^2+\eps^2)^{1/2}$ shows that $|\beta|$ is a nonnegative heat
subsolution.
\end{proof}

\section{A compactness theorem of Bamler}\label{sec:limits}

We shall need the following consequence of Bamler's compactness and
structure theory \cite{BamCompact,BamStructure}, which we formulate so
as to include the strong almost-soliton potentials of Hallgren and Jian
\cite{HJ,Hallgren}, as well as the cutoff properties required for the
vanishing argument in Section~\ref{sec:vanishing}.

\begin{theorem}\label{thm:compactness}
Let $p_i=(x_i,t_i)$ be points of the fixed flow with $t_i\to1$, and let
$\tau_i\to0$ be positive time scales. Suppose that there exist
$K_i\to\infty$ such that $K_i\tau_i\le t_i$ and
\begin{equation}\label{eq:smallentropysequence}
 \Nash_{p_i}(\tau_i/K_i)-\Nash_{p_i}(K_i\tau_i)\longrightarrow0.
\end{equation}
Consider the rescaled flows and their conjugate heat measures
\[
 g_i(s)=\tau_i^{-1}g(t_i+\tau_is),\qquad
 d\nu_{i,s}=(4\pi|s|)^{-n}e^{-f_i}dV_{g_i(s)},\qquad s<0.
\]
Then, after passing to a subsequence, these flows converge to a possibly
singular ancient shrinking K\"ahler-Ricci flow with regular spacetime
$\RR$ and conjugate heat measures $\nu_s$, and the following properties
hold.
\begin{enumerate}
 \item The metrics, the complex structures, the time vector fields and
 the functions $f_i$ converge smoothly on compact subsets of $\RR$. For
 every $s<0$, $\nu_s(\RR_s)=1$, and the limiting heat potential
 satisfies
 \[
 \Ric_g+\nabla^2f=\frac{g}{2|s|}\quad\hbox{on }\RR_s.
 \]
 \item For every compact interval $I\subset(-\infty,0)$, there exist
 smooth functions $h_i$ on $X\times I$ and constants $c_i$ such that,
 with $\tau=-s$ in these rescaled coordinates,
 \begin{align}
  &\tau\square h_i=h_i-c_i,\label{eq:strongheat}\\
  &\sup_{s\in I}\int_X|d(f_i-h_i)|^2d\nu_{i,s}\longrightarrow0,
       \label{eq:stronggradient}\\
  &\int_I\!\int_X\left|\Ric_{g_i}+\nabla^2h_i-\frac{g_i}{2\tau}\right|^2
       d\nu_{i,s}\,ds\longrightarrow0.\label{eq:strongerror}
 \end{align}
 \item On the $(-1)$-time slice, bounded balls have finite regular
 volume, and the heat density is locally bounded above. On each bounded
 ball $B$, there exist smooth cutoff functions $0\le\chi_\eps\le1$ on
 the regular part, vanishing near the singular set, such that
 \[
 \chi_\eps\longrightarrow1\quad\hbox{locally on }\RR_{-1},\qquad
 \|d\chi_\eps\|_{L^3(B\cap\RR_{-1},dV_g)}\longrightarrow0.
 \]
 Moreover, fixing a point $z_0\in\RR_{-1}$, there exist smooth cutoff
 functions $0\le\rho_R\le1$ on the regular part, indexed by the radius
 $R\ge1$ of a ball centered at $z_0$ in the limit metric on the
 $(-1)$-time slice, such that
 \[
 \rho_R=1\ \hbox{on }B(z_0,R/2)\cap\RR_{-1},\qquad
 \operatorname{supp}\rho_R\subset B(z_0,R)\cap\RR_{-1},\qquad
 |d\rho_R|\le C/R.
 \]
 Thus $\rho_R\to1$ locally as $R\to\infty$, and
 $\rho_R\chi_\eps$ has compact support in $\RR_{-1}$.
\end{enumerate}
\end{theorem}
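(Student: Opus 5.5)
The plan is to assemble Theorem~\ref{thm:compactness} from Bamler's theory in three layers, since it is essentially a packaging of known results.

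\textbf{Step 1 (the limit and part (i)).} The rescaled flows $g_i$ have scalar curvature bounded below by $-C\tau_i\to0$ (Proposition~\ref{prop:basic}) and pointed Nash entropy bounded below by $-Y$ (Lemma~\ref{lem:entropy}), uniformly in $i$. Bamler's compactness theorem \cite{BamCompact} therefore gives $\mathbb F$-convergence, after passing to a subsequence, of the metric flow pairs $(g_i,\nu_{i,s})$ to a limit metric flow over $(-\infty,0)$; the condition $K_i\tau_i\le t_i$ guarantees that the rescaled flows are defined on $[-K_i,0)$, so the limit is ancient. By the structure theory \cite{BamStructure}, convergence is smooth on the regular part $\RR$, including the time vector field, and the singular set has codimension at least four, so $\nu_s(\RR_s)=1$. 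The complex structures $J_i$ are parallel and uniformly bounded, so they converge smoothly on $\RR$ to a parallel complex structure, and the limit is K\"ahler. The smallness \eqref{eq:smallentropysequence} of the entropy drop, combined with \eqref{eq:NashW}--\eqref{eq:Wderivative} (the $\mathcal W$-entropy is squeezed between Nash entropies at comparable scales), forces
\[
\int\!\!\int\tau\left|\Ric_{g_i}+\nabla^2f_i-\frac{g_i}{2\tau}\right|^2d\nu_{i,s}\,ds\to0
\]
on compact time intervals. Local smooth convergence of $f_i$, which follows from the heat-kernel estimates of \cite{BamHeat}, then yields the soliton equation on $\RR_s$.

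\textbf{Step 2 (part (ii)).} For the strong almost-soliton potentials I would follow Hallgren--Jian \cite{HJ} and Hallgren \cite{Hallgren}. Define $h_i$ on $X\times I$ as the solution of $\tau\square h_i=h_i-c_i$ with terminal (or initial) data $f_i$ at a fixed time slightly beyond $I$. Concretely, $\tau^{-1}(h_i-c_i)$-type quantities are obtained from $\Phi$-evolutions of $f_i$ via \eqref{eq:DuhamelScalar}, with $c_i$ the appropriate Nash/$\mathcal W$ normalization. The key inputs are:
\begin{itemize}
\item the evolution of $\tau(2\Delta f-|df|^2+R)+f-2n$ under $\square^*$, which is controlled by the entropy drop, giving \eqref{eq:stronggradient} by a Bochner/energy argument of the form \eqref{eq:hodgeenergy} applied to $d(f_i-h_i)$;
\item the Hessian error \eqref{eq:strongerror}, which follows by combining this with the $L^2$ soliton error from Step~1 and the weighted $L^2$ Hessian identity $\int|\nabla^2 u|^2d\nu\le C\int|du|^2+\ldots$ obtained from the Bochner formula $\square|du|^2=-2|\nabla du|^2+\ldots$.
\end{itemize}

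\textbf{Step 3 (part (iii)).} Finite regular volume of balls and the local upper bound for the heat density come from Bamler's volume bounds and Gaussian heat-kernel upper bounds \cite{BamHeat}, which pass to the limit. The cutoffs $\chi_\eps$ come from the Minkowski-type estimate for the singular set in \cite{BamStructure}: the $r$-neighborhood of the singular set in $B$ has volume $\le C r^{4-\delta}$. Taking $\chi_\eps$ to be a smoothed function of the curvature scale with $|d\chi_\eps|\le C/r$ on shells $\{r\le r_{\mathrm{Rm}}\le2r\}$ gives $\|d\chi_\eps\|_{L^3}^3\le Cr^{-3}r^{4-\delta}\to0$. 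The cutoffs $\rho_R$ are obtained by mollifying $\max(0,\min(1,2-2d(z_0,\cdot)/R))$ on the regular part; the distance function is $1$-Lipschitz, so the gradient bound $|d\rho_R|\le C/R$ holds.

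I expect Step~2 to be the main obstacle: producing $h_i$ with exact heat-type equation yet $L^2$-close gradients uniformly in $s\in I$ requires the full Hallgren--Jian machinery, and here I would simply cite \cite{HJ,Hallgren} after checking their hypotheses (lower scalar bound, entropy bound, entropy pinching) hold.
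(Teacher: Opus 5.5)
Your treatment of parts (i) and (iii) follows the paper. You apply Bamler's compactness and structure theory under $R_{g_i}\ge-C\tau_i$ and the entropy bound, pass the parallel complex structures to the limit, obtain the soliton equation from entropy pinching, and build the cutoffs from the $r^{4-\delta}$ volume bound for the singular region. There are two cosmetic differences. You extract the integrated soliton error directly from \eqref{eq:NashW}--\eqref{eq:Wderivative} by squeezing $\mathcal W_p$ between Nash entropies at comparable scales, where the paper cites Bamler's Proposition~7.1 and Theorem~15.69. You also use curvature-scale shells where the paper uses distance cutoffs. These rest on the same estimates.

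In part (ii), you and the paper both end up citing Hallgren--Jian. However, the mechanism you sketch is not the paper's, and as stated it does not close.

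\emph{The data for $h_i$.} The equation $\tau\square h=h-c$ means $\partial_sh=\Delta h+(h-c)/\tau$, which is forward parabolic. So $h_i$ must be fixed by initial data $h_i(s_0)=f_i(s_0)$ at some $s_0<\min I$, which is available since $K_i\to\infty$; terminal data would be ill-posed. Also, it is $\tau(h_i-c_i)$, not $\tau^{-1}(h_i-c_i)$, that solves the heat equation.

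\emph{The forcing term.} The $\square^*$-evolution of Perelman's quantity $w=\tau(2\Delta f_i-|df_i|^2+R)+f_i-2n$ only gives integrated information on $w$. That does not control the forcing term in an energy estimate for $d(f_i-h_i)$. The missing input is the identity $dw=2\tau\operatorname{div}_{f_i}T$. Here $T=\Ric_{g_i}+\nabla^2f_i-g_i/(2\tau)$ and $(\operatorname{div}_fT)_j=\nabla^kT_{kj}-T_{kj}\nabla^kf$. The identity follows from the contracted Bianchi identity and the Ricci commutation formula.

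\emph{The estimate.} Set $u=\tau(f_i-h_i)$. Then $\square u=-(w-c_i+n)$ and $du(s_0)=0$. The Bochner formula, heat duality and one weighted integration by parts give
\[
 \frac{d}{ds}\int|du|^2d\nu_{i,s}
 =-2\int|\nabla^2u|^2d\nu_{i,s}+4\tau\int\ip{T}{\nabla^2u}\,d\nu_{i,s}
 \le-\int|\nabla^2u|^2d\nu_{i,s}+4\tau^2\int|T|^2d\nu_{i,s}.
\]
Integrate from $s_0$ and use $\int\!\int\tau|T|^2\,d\nu_{i,s}\,ds\to0$ from your Step~1. This gives \eqref{eq:stronggradient}. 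Since $\Ric_{g_i}+\nabla^2h_i-g_i/(2\tau)=T-\tau^{-1}\nabla^2u$, it also gives \eqref{eq:strongerror}.

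\emph{Comparison with the paper.} The paper argues slice by slice through
\[
 \tau\int|d(f_i-h_i)|^2d\nu_{i,s}=\int\bigl(A(f_i)-A(h_i)-f_i+h_i\bigr)\,d\nu_{i,s}.
\]
Beyond almost self-similarity, this needs the smallness of $\int A(h_i)\,d\nu_{i,s}$ and $\int(h_i-f_i)\,d\nu_{i,s}$, which Hallgren's construction supplies. The repaired energy route is self-contained and uses only the integrated soliton error. It is also insensitive to the choice of $c_i$, since changing $c_i$ alters $h_i$ only by a function of $s$. The paper's route is shorter, given the citations.
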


\begin{proof}
The entropy lower bound \eqref{eq:entropy} and the lower bound
$R_{g_i}\ge-C\tau_i$ for the rescaled scalar curvature allow us to apply
Bamler's compactness and structure theory
\cite{BamCompact,BamStructure}, and the backward lifetime of the
rescaled sequence tends to infinity. By
\cite[Proposition 7.1]{BamStructure}, a small entropy drop implies
almost selfsimilarity, and taking a diagonal sequence produces the
shrinking limit by \cite[Theorem 15.69]{BamStructure}. The smooth
convergence on the regular part and the full mass at fixed negative
times follow from
\cite[Section 15.1, Theorem 15.28, Corollaries 15.47 and 17.2]{BamStructure}.
Since the complex structures are parallel, they pass to a smooth limit
on this regular spacetime along a subsequence.

The strong almost-soliton potentials are provided by \cite{HJ}, in the
form stated in \cite[Definition 2.3, Proposition 2.4]{Hallgren}; we
construct them on an interval slightly larger than the one under
consideration. They satisfy \eqref{eq:strongheat}, where $c_i$ equals an
entropy constant plus $n$. The comparison of their gradients at each
time is \cite[Lemma 2.5]{Hallgren}; for the reader's convenience, we
note that it follows from an ordinary weighted integration by parts.
Define, for this paragraph only,
\[
 A(v)=\tau(R+2\Delta v-|dv|^2)+v-2n-\Nash_{p_i}(\tau_i).
\]
Then
\[
 \tau\int|d(f_i-h_i)|^2d\nu_{i,s}
 =\int\bigl(A(f_i)-A(h_i)-f_i+h_i\bigr)d\nu_{i,s}.
\]
By almost selfsimilarity and the construction of the strong potentials,
the terms on the right-hand side tend to zero uniformly on the chosen
interval. This proves \eqref{eq:stronggradient}, while the integrated
soliton error gives \eqref{eq:strongerror}.

Finally, the structure theorem for solitons
\cite[Theorem 2.18, Theorem 2.16(d), Definition 2.15(4)]{BamStructure}
gives, on a fixed slice, spatial Minkowski codimension at least four for
the singular set, together with local volume bounds. Consequently, in a
fixed ball, the volume of an $\eps$-neighborhood of the singular set is
at most $C\eps^{4-\gamma}$, for any fixed $0<\gamma<1$. Since a distance
cutoff has gradient at most $C/\eps$, we obtain
\[
 \int|d\chi_\eps|^3dV_g\le C\eps^{-3}\eps^{4-\gamma}
 =C\eps^{1-\gamma}\longrightarrow0.
\]
Smooth approximation on the regular locus then suffices. The
cutoffs $\rho_R$ are likewise obtained from distance functions, and the
upper bounds for the heat kernel give the local boundedness of the
density. This completes the proof of the stated consequence of the
compactness, structure and almost-soliton results cited above.
\end{proof}

\section{Ricci potentials}\label{sec:potential}

Let $\varphi$ be the solution of \eqref{eq:MA}. Following the
Ricci-potential framework of Jian, Song and Tian \cite{JST}, we use the
global Ricci potential of Jian and Song \cite{JSFanoII} in the
unnormalized form
\begin{equation}\label{eq:Udef}
 U=(1-t)\dot\varphi+\varphi+nt.
\end{equation}
This normalization is convenient when comparing time differences
$\tau=t-s$ which need not be comparable to $1-t$.

\begin{lemma}\label{lem:basic}
There exist constants $c,C>0$ such that
\begin{equation}\label{eq:potential}
 \begin{gathered}
 \ddbar U=\omega-\theta-(1-t)\Ric(\omega),\\
 U_t=n-(1-t)R,\qquad \square U=\tr_\omega(\theta).
 \end{gathered}
\end{equation}
\begin{equation}\label{eq:basicU}
 |U|+|\partial U|_\omega^2\le C.
\end{equation}
Moreover, $\tr_\omega(\theta)$ and $R$ satisfy \eqref{eq:basictrace},
and $0\le \tr_\omega(\theta)\le C$.
\end{lemma}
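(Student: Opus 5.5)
The identities in \eqref{eq:potential} will be obtained by direct differentiation of \eqref{eq:MA}, and the bounds \eqref{eq:basicU} by maximum-principle arguments built on Proposition~\ref{prop:basic}.

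\textbf{Step 1: the $\ddbar U$ identity.} Applying $\ddbar$ to \eqref{eq:MA} gives
\[
\ddbar\dot\varphi=-\Ric(\omega)+\Ric(\Omega)=-\Ric(\omega)+\omega_0-\theta .
\]
Moreover $\ddbar\varphi=\omega-\omega_t=\omega-(1-t)\omega_0-t\theta$. Hence
\[
\ddbar U=(1-t)\bigl(\omega_0-\theta-\Ric(\omega)\bigr)+\omega-(1-t)\omega_0-t\theta
=\omega-\theta-(1-t)\Ric(\omega).
\]

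\textbf{Step 2: the time derivative and $\square U$.} In the proof of Proposition~\ref{prop:basic} we already have $H_t=-(1-t)R$ and $\square H=\tr_\omega(\theta)-n$ for $H=(1-t)\dot\varphi+\varphi$. Since $U=H+nt$, this gives $U_t=n-(1-t)R$ and $\square U=\tr_\omega(\theta)$.

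Alternatively, these follow from $\partial_t\dot\varphi=\tr_\omega(\partial_t\omega)=-R$. For $\square U$, take the trace of Step 1: this gives $\Delta U=n-\tr_\omega\theta-(1-t)R$, and subtracting from $U_t$ yields the claim.

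\textbf{Step 3: the $C^0$ bound.} The bound $|U|\le C$ follows from $|H|\le C$, established in Proposition~\ref{prop:basic}, together with $0\le t<1$. The bound $0\le\tr_\omega\theta\le C$ and inequality \eqref{eq:basictrace} are taken directly from Proposition~\ref{prop:basic}.

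\textbf{Step 4: the gradient bound.} This is the only real step. Note that $\square U=\tr_\omega\theta$, which is bounded but not constant. I would compute the Bochner-type formula
\[
\square|\partial U|^2=-|\nabla\partial U|^2-|\bar\nabla\partial U|^2+2\,\mathrm{Re}\ip{\partial\tr_\omega\theta}{\partial U}.
\]
The Ricci terms cancel under the unnormalized flow, since $\partial_t\omega=-\Ric$ contributes $\Ric(\partial U,\overline{\partial U})$, which offsets the commutator term. The cross term is bounded by $|\partial\tr_\omega\theta|^2+|\partial U|^2$.

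Now consider
\[
Q=|\partial U|^2+A\,\tr_\omega\theta+BU^2 .
\]
Using \eqref{eq:basictrace}, we get $\square(A\tr_\omega\theta)\le -cA|\partial\tr_\omega\theta|^2+CA$. Using $\square U^2=2U\tr_\omega\theta-2|\partial U|^2$ with $|U|\le C$, we get $\square(BU^2)\le -2B|\partial U|^2+CB$. Choosing $A\ge 1/c$ absorbs $|\partial\tr_\omega\theta|^2$, and choosing $B\ge1$ absorbs $|\partial U|^2$. Then $\square Q\le C$ on $[0,1)$, and the maximum principle gives $Q\le Q(0)+C$. Therefore $|\partial U|^2\le C$.

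The main obstacle is verifying the sign and the exact cancellation of the curvature terms in the Bochner formula for $|\partial U|^2$, given that $\partial_t$ also acts on the metric. If a stray $\Ric$ term remained, the estimate would fail. Checking the commutator $[\square,\partial]$ on functions along the Kähler–Ricci flow confirms that it vanishes: $\partial$ commutes with $\square$ on functions, and the metric variation is exactly compensated.
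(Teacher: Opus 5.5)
Your proposal is correct and follows the paper's proof essentially step for step. The identities come from differentiating the Monge--Amp\`ere flow, and $|U|\le C$, $0\le\tr_\omega(\theta)\le C$ and the trace inequality are inherited from Proposition~\ref{prop:basic}. The gradient bound uses the same K\"ahler Bochner formula together with the auxiliary term $A\tr_\omega(\theta)$.

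The only difference is minor and lies in Step 4. You absorb the $|\partial U|^2$ coming from the cross term by adding $BU^2$, using $\square U^2=2U\tr_\omega(\theta)-2|\partial U|^2$ and $|U|\le C$, which gives $\square Q\le C$. The paper instead keeps $\square F\le F+C$ for $F=|\partial U|^2+A\tr_\omega(\theta)$ and applies the maximum principle to $e^{-t}(F+C)$, using $t<1$. Both work.
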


\begin{proof}
Differentiating \eqref{eq:MA}, we obtain
\[
 \ddbar\dot\varphi=-\Ric(\omega)+\omega_0-\theta,
 \qquad \ddot\varphi=-R.
\]
Substituting into \eqref{eq:Udef} gives the first two identities in
\eqref{eq:potential}. Taking the trace of the first identity, we find
\[
 \Delta U=n-\tr_\omega(\theta)-(1-t)R,
\]
and hence $\square U=\tr_\omega(\theta)$. Since $R\ge-C$, the identity
for $U_t$ gives a uniform upper bound for $U$, while the minimum
principle applied to $\square U=\tr_\omega(\theta)\ge0$ gives a uniform
lower bound.

By the K\"ahler Bochner formula,
\begin{align*}
 \square|\partial U|^2
 &=-|\nabla^{2,0}U|^2-|\partial\bar\partial U|^2
       +2\operatorname{Re}\ip{\partial \tr_\omega(\theta)}{\partial U}\\
 &\le |\partial U|^2+|\partial \tr_\omega(\theta)|^2.
\end{align*}
For a fixed sufficiently large $A$, Proposition~\ref{prop:basic} shows
that $F=|\partial U|^2+A\tr_\omega(\theta)$ satisfies
\[
 \square F\le F+C.
\]
Applying the maximum principle to $e^{-t}(F+C)$ proves the gradient
bound. We note that all these estimates use only the lower bound for the
scalar curvature.
\end{proof}

For each base point $p=(x,t)$ and each earlier time $0\le s<t$, let
$\tau=t-s$ and define
\begin{equation}\label{eq:defect}
 D_p(\tau)=\inf_{d\zeta=0}\int_X
       |\dc U(s)-\zeta|^2_{g(s)/\tau}\,d\nu_{x,t;s}.
\end{equation}
Here the infimum is taken over smooth closed real one-forms on $X$, and
by Lemma~\ref{lem:projection} it is attained. More precisely, let
$\mathcal H_{p,\tau}$ denote the weighted $L^2$ completion of the space
of smooth real one-forms, and let $\mathcal Z_{p,\tau}$ be the closure
in it of the subspace of smooth closed forms:
\[
 \mathcal H_{p,\tau}=L^2\bigl(T^*X,g(s)/\tau;d\nu_{x,t;s}\bigr),
 \qquad
 \mathcal Z_{p,\tau}
 =\overline{\{\zeta\in C^\infty(T^*X):d\zeta=0\}}^{\mathcal H_{p,\tau}}.
\]
Then $D_p(\tau)$ is the square of the $L^2$ distance from $\dc U(s)$ to
the closed forms or, equivalently, the squared norm of its class in the
$L^2$ space of one-forms modulo closed forms:
\[
 D_p(\tau)
 =\operatorname{dist}_{\mathcal H_{p,\tau}}\!\bigl(\dc U(s),\mathcal Z_{p,\tau}\bigr)^2
 =\bigl\|[\dc U(s)]\bigr\|_{\mathcal H_{p,\tau}/\mathcal Z_{p,\tau}}^2.
\]

Adding a local pluriharmonic function to $U$ changes $\dc U$ by a
closed form and leaves $d\dc U=2\ddbar U$ unchanged, which explains the
role of closed forms in \eqref{eq:defect}.

\section{Integral bound for scalar curvature}\label{sec:scalar}

In this section, we relate the scalar curvature to the weighted integral
\eqref{eq:defect}. The estimate relies on a scalar Bochner calculation
and on the Hodge heat evolution of the minimizing closed form.

\begin{proposition}\label{prop:scalar}
For $t\ge1/2$ and $\lambda=1-t$,
\begin{equation}\label{eq:scalarreduction}
 \lambda R(x,t)\le C\left(1+\frac{D_{(x,t)}(\lambda)}{\lambda^2}\right).
\end{equation}
\end{proposition}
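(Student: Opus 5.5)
The plan is to express $\lambda R$ through the Laplacian of $U$ and to compare $U$ at time $t$ with the heat evolution of $U(s)$ from the earlier time $s=t-\lambda$. By Lemma~\ref{lem:basic}, $\Delta U=n-\tr_\omega(\theta)-\lambda R$, so
\[
 \lambda R(x,t)=n-\tr_\omega(\theta)(x,t)-\Delta U(x,t)\le n-\Delta U(x,t).
\]
It therefore suffices to bound $-\Delta U(x,t)$ from above by $C(1+D_{(x,t)}(\lambda)/\lambda^2)$.

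\textbf{Step 1: Duhamel splitting.} Since $\square U=\tr_\omega(\theta)$, the Duhamel formula \eqref{eq:DuhamelScalar} gives
\[
 U(t)=\Phi_{s,t}U(s)+w(t),\qquad w(t)=\int_s^t\Phi_{r,t}\tr_\omega(\theta)(r)\,dr .
\]
I would treat the two pieces separately.

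For $w$, we have $\square w=\tr_\omega(\theta)$, $0\le\tr_\omega(\theta)\le C$, and the Schwarz inequality \eqref{eq:basictrace} gives $\square\tr_\omega(\theta)\le C$. These should give a lower bound $\Delta w\ge -C$ on this time window. I would first try a maximum-principle argument on $\Delta w-\partial_tw$, or write $\Delta w=\partial_tw-\tr_\omega(\theta)$ and bound $\partial_tw$ from below via $\partial_t w=\tr_\omega(\theta)(t)+\int_s^t\partial_t\Phi_{r,t}\tr_\omega(\theta)\,dr$, using the subsolution property of $\tr_\omega(\theta)$. This is a lower-order term and contributes only the constant $C$.

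\textbf{Step 2: the main piece $\Phi_{s,t}U(s)$ through one-forms.} Decompose $\dc U(s)=\zeta+\beta$ as in Lemma~\ref{lem:projection}, with respect to $\nu_{x,t;s}$. Here $\zeta$ is the closed minimizer, so that
\[
 \int|\beta|_{g(s)}^2\,d\nu_{x,t;s}=\lambda^{-1}D_{(x,t)}(\lambda).
\]
By \eqref{eq:commute}, $\dc\Phi_{s,t}U(s)=\HH_{s,t}\zeta+\HH_{s,t}\beta$, and $\HH_{s,t}\zeta$ remains closed by Lemma~\ref{lem:hodgeheat}. Hence
\[
 d\dc\,\Phi_{s,t}U(s)=d\,\HH_{s,t}\beta ,
\]
and, since $\Delta v=\tfrac12\tr_\omega(d\dc v)$, we get $\Delta\Phi_{s,t}U(s)(x,t)=\tfrac12\langle\omega,d\HH_{s,t}\beta\rangle(x,t)$. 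The closed part, which carries all the gauge ambiguity, drops out completely. What remains is to show
\[
 \bigl|\langle\omega,d\HH_{s,t}\beta\rangle\bigr|(x,t)\le \frac{C}{\lambda}\Bigl(\int|\beta|^2_{g(s)}\,d\nu_{x,t;s}\Bigr)^{1/2}.
\]
Granting this, $\lambda|\Delta\Phi_{s,t}U(s)|\le C\sqrt{D/\lambda^2}\le C(1+D/\lambda^2)$, which gives \eqref{eq:scalarreduction}.

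\textbf{Step 3: the pointwise smoothing estimate (main obstacle).} This estimate is where I expect the real difficulty, because no curvature bound is available. My first attempt is the following.
\begin{enumerate}
 \item Split the interval $[s,t]$ at its midpoint $m$. On $[s,m]$, use the energy identity \eqref{eq:hodgeenergy} weighted against $\nu_{x,t;\cdot}$; note that $|\HH\beta|^2$ is a subsolution, so integration against the conjugate heat measure is monotone. This gives control of $\int_s^m\!\int|\nabla\HH\beta|^2d\nu\,dr$ by $\int|\beta|^2d\nu_{x,t;s}$, and hence an intermediate time $r_0\in[s,m]$ at which $\lambda\int|\nabla\HH\beta|^2d\nu_{x,t;r_0}$ is controlled.
 \item On $[r_0,t]$, obtain a pointwise bound at $(x,t)$ from this weighted integral at time $r_0$. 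Since $\langle\omega,d\gamma\rangle$ is a first derivative of $\gamma$, it is enough to control $|\nabla\HH\beta|$ at $(x,t)$. I would try to show that a quantity such as $(r-r_0)|\nabla\HH\beta|^2+|\HH\beta|^2$, or the $(0,1)$-part handled via \eqref{eq:complexheat}, is a heat subsolution up to errors that can be absorbed.
 \item Convert the resulting integral bound into a pointwise one. For a subsolution $F$, duality gives $F(x,t)\le\int F\,d\nu_{x,t;r}$ for the $\nu$-average. If the subsolution is not exact, Bamler's hypercontractivity \eqref{eq:hyper} is available to pass from $L^2(\nu)$ to higher $L^q(\nu)$ norms.
\end{enumerate}
If the curvature terms in the Bochner formula for $|\nabla\HH\beta|^2$ prevent the subsolution property in (ii), I would instead dualize. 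The idea is to pair $d\HH_{s,t}\beta$ with $\omega$ at $(x,t)$ and move the evolution onto a backward adjoint Hodge flow starting from $\omega$ concentrated near $x$. The goal is to reduce the pointwise bound to an $L^2(\nu)$ estimate of the adjoint solution's codifferential, of size $\lambda^{-1/2}$, using $\Ric$-free parabolic scaling together with the entropy bound of Lemma~\ref{lem:entropy}.
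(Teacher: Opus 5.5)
There is a genuine gap. Your reduction $\lambda R(x,t)\le n-\Delta U(x,t)$ is correct, and so are the Duhamel splitting and the observation in Step~2 that the closed part drops out of $d\dc\,\Phi_{s,t}U(s)$. But the proof then rests on two \emph{pointwise} second-order estimates at the base point: $\Delta w\ge -C$ in Step~1, and the smoothing bound for $\langle\omega,d\HH_{s,t}\beta\rangle(x,t)$ in Step~3. Neither is established, and the routes you sketch cannot give them.

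Along the Ricci flow only first-order quantities have curvature-free Bochner identities ($\square|du|^2=-2|\nabla du|^2$, \eqref{eq:hodgebochner}, \eqref{eq:complexheat}). One derivative higher, the curvature enters with no sign:
\begin{itemize}
\item for caloric $u$, $\square\Delta u=\ip{\Ric}{\ddbar u}$;
\item for your $w$, $\square\Delta w=\Delta\tr_\omega(\theta)+\ip{\Ric}{\ddbar w}$;
\item $\square|\nabla\HH\beta|^2$ contains full curvature terms.
\end{itemize}
Hence the subsolution in (ii) is unavailable without a curvature bound, which is exactly what is being proved. The dual problem is a backward flow of $2$-forms, whose Weitzenb\"ock term does not cancel against the variation of the metric as it does for $1$-forms. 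Hypercontractivity improves integrability, not differentiability. In Step~1, $\Delta w-\partial_tw=-\tr_\omega(\theta)$ carries no information, and $\partial_t\Phi_{r,t}F=\Delta\Phi_{r,t}F$ brings back the same problem. So Step~1 is not a lower-order issue; it is the same kind of Hessian estimate.

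There is also a scaling slip. Since $\lambda R=n-\tr_\omega(\theta)-\Delta U$ has no factor $\lambda$ in front of $\Delta U$, you must bound $|\Delta\Phi_{s,t}U(s)|(x,t)$ itself, not $\lambda|\Delta\Phi_{s,t}U(s)|(x,t)$. Your stated target only gives $C\lambda^{-3/2}D^{1/2}$, which is too weak by $\lambda^{-1/2}$ when $D\sim\lambda^2$. The scale-invariant target would be $C\lambda^{-1/2}\|\beta\|_{L^2(g(s),\nu)}$.

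The missing idea is that no pointwise smoothing is needed. The curvature-free control of the non-closed part is only a spacetime integral, and it is turned into a pointwise bound through the evolution of $R$ itself. In the rescaled picture on $[-1,0]$, take $v=\lambda^{-1}U$ and let $w$ solve $\square w=\tr_\omega(\theta)$, $w(-1)=0$. The paper sets $\eta=\dbar(w-v)-\sqrt{-1}\zeta^{0,1}$, which solves the Dolbeault heat equation with $\eta(-1)=\sqrt{-1}\beta^{0,1}$. Because $\zeta$ is real and closed, $B=\ddbar(w-v)$ is the Hermitian part of $\nabla^{1,0}\eta$. Then \eqref{eq:complexheat} gives $\int_{-1}^0\!\int|B|^2\,d\nu_s\,ds\le\lambda^{-2}D_{(x,t)}(\lambda)$.

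The paper next uses $\ddbar w=\omega-(1-s)\Ric-\theta+B$ to show that $W=\frac{(1-s)^2}{2}R+|\partial w|^2+A_0\tr_\omega(\theta)+C_0\lambda$ satisfies $\square W\le W+2|B|^2+C$. The point is that the $|\Ric|^2$ produced by $\square R$ is absorbed by $-|\partial\dbar w|^2$ after completing the square. Heat duality, the bound $\int R\,d\nu_a\le2n$ for $a\in[-1,-\frac12]$, and averaging over the initial time then finish the proof.

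Your splitting can be completed in the same spirit even more directly:
\begin{itemize}
\item Write $R(x,0)=\int R\,d\nu_{-1}+\int_{-1}^0\!\int|\Ric|^2\,d\nu_s\,ds$, and use $\int R\,d\nu_{-1}\le n$.
\item Use the pointwise bound $|\Ric|\le\sqrt n+C+|\partial\dbar w|+|B|$.
\item The spacetime bound on $|\partial\dbar w|^2$ follows from the Bochner formula for $|\partial w|^2$, together with \eqref{eq:wenergy} and \eqref{eq:scaledtrace}.
\end{itemize}
In either form, what replaces your Step~3 is the integrated bound on $B$, never a pointwise one.
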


\begin{proof}
\emph{Step 1.}
Fix $t\ge1/2$ and $\lambda=1-t$. Rescale $[t-\lambda,t]$ to $[-1,0]$
by replacing $\omega(t+\lambda s)$ and the fixed reference form by
$\lambda^{-1}\omega(t+\lambda s)$ and $\lambda^{-1}\theta$, respectively.
Write these forms as $\omega(s)$ and $\theta$, denote the corresponding
real metric and scalar curvature by $g(s)$ and $R$, and set
\[
 \nu_s=\nu_{x,t;t+\lambda s},\qquad
 v(s)=\lambda^{-1}U(t+\lambda s),\qquad
 1-(t+\lambda s)=\lambda(1-s).
\]
Then \eqref{eq:potential} and the trace bound give
\begin{equation}\label{eq:scaledpotential}
 \begin{gathered}
 \square v=\tr_\omega(\theta),\qquad
 \ddbar v=\omega-(1-s)\Ric(\omega)-\theta,\\
 0\le\tr_\omega(\theta)\le C,\qquad |\theta|_\omega\le C.
 \end{gathered}
\end{equation}
We also have $R\ge-C\lambda$ and
\begin{equation}\label{eq:scaledtrace}
 \square\tr_\omega(\theta)
 \le-c|\partial\tr_\omega(\theta)|^2+C\lambda.
\end{equation}
Let $w$ solve
\[
 \square w=\tr_\omega(\theta),\qquad w(-1)=0.
\]
The maximum principle and \eqref{eq:duality} yield
\[
 \begin{gathered}
 0\le w(s)\le C(s+1)\le C,\qquad \square(w-v)=0,\\
 \square w^2=2w\tr_\omega(\theta)-2|\partial w|^2,\\
 \frac{d}{ds}\int_Xw^2\,d\nu_s
 =2\int_Xw\tr_\omega(\theta)\,d\nu_s
  -2\int_X|\partial w|^2\,d\nu_s.
 \end{gathered}
\]
Integrating from $-1$ to $0$, using $\nu_s\to\delta_x$ as $s\nearrow0$,
we obtain
\begin{equation}\label{eq:wenergy}
 \begin{aligned}
 \int_{-1}^0\!\int_X|\partial w|^2\,d\nu_s\,ds
 &=\int_{-1}^0\!\int_Xw\tr_\omega(\theta)\,d\nu_s\,ds
       -\tfrac12w(x,0)^2\\
 &\le C.
 \end{aligned}
\end{equation}

\smallskip
\Needspace{5\baselineskip}
\noindent\emph{Step 2.}
Apply Lemma~\ref{lem:projection} at time $-1$:
\[
 \dc v(-1)=\zeta(-1)+\beta,\qquad d\zeta(-1)=0,
\]
\[
 \begin{aligned}
 \|\beta\|_2^2
 &=\inf_{d\gamma=0}\int_X|\dc v(-1)-\gamma|^2\,d\nu_{-1}\\
 &=\lambda^{-2}\inf_{d\gamma=0}
   \int_X|\dc U(t-\lambda)-\lambda\gamma|_{g(-1)}^2\,d\nu_{-1}\\
 &=\lambda^{-2}D_{(x,t)}(\lambda).
 \end{aligned}
\]
Evolve $\zeta$ by the Hodge heat equation and define
\[
 \begin{gathered}
 \zeta(s)=\HH_{-1,s}\zeta(-1),\qquad d\zeta(s)=0,\\
 B=\ddbar(w-v),\qquad
 \eta=\bar\partial(w-v)-\sqrt{-1}\zeta^{0,1}.
 \end{gathered}
\]
By Lemma~\ref{lem:hodgeheat} and $\square(w-v)=0$,
\[
 \begin{gathered}
 \partial_s\eta=-\Delta_{\bar\partial,\omega(s)}\eta,\\
 \eta(-1)=-\bar\partial v(-1)-\sqrt{-1}\zeta^{0,1}(-1)
 =\sqrt{-1}\beta^{0,1},\qquad
 \|\eta(-1)\|_2^2=\|\beta\|_2^2.
 \end{gathered}
\]
Since $\zeta$ is real and closed,
\[
 \nabla_i\zeta_{\bar j}=\nabla_{\bar j}\zeta_i
 =\overline{\nabla_j\zeta_{\bar i}},
\]
and therefore
\[
 \begin{gathered}
 \nabla_i\eta_{\bar j}
 =(w-v)_{i\bar j}-\sqrt{-1}\nabla_i\zeta_{\bar j},\\
 \tfrac12\left(\nabla_i\eta_{\bar j}
       +\overline{\nabla_j\eta_{\bar i}}\right)
 =(w-v)_{i\bar j},\qquad
 |B|^2\le|\nabla^{1,0}\eta|^2.
 \end{gathered}
\]
Integrating \eqref{eq:complexheat} against $\nu_s$ now gives
\begin{equation}\label{eq:Bhessian}
 \begin{aligned}
 \int_{-1}^0\!\int_X|B|^2\,d\nu_s\,ds
 &\le\int_{-1}^0\!\int_X
       \bigl(|\nabla^{1,0}\eta|^2+|\nabla^{0,1}\eta|^2\bigr)
       \,d\nu_s\,ds\\
 &=\|\eta(-1)\|_2^2-|\eta(x,0)|^2\\
 &\le\|\beta\|_2^2=\lambda^{-2}D_{(x,t)}(\lambda).
 \end{aligned}
\end{equation}

\smallskip
\Needspace{5\baselineskip}
\noindent\emph{Step 3.}
Set $e=|\partial w|^2$ and $V=(1-s)^2R/2+e$. The scalar curvature
evolution and the Bochner formula give
\begin{align*}
 \square\left(\frac{(1-s)^2}{2}R\right)
 &=\frac{(1-s)^2}{2}|\Ric|^2-(1-s)R,\\
 \square e
 &=-|\nabla^{2,0}w|^2-|\partial\bar\partial w|^2
   +2\operatorname{Re}\ip{\partial\tr_\omega(\theta)}{\partial w}.
\end{align*}
In an $\omega$-unitary frame, regard $(1,1)$-forms as Hermitian matrices
and set
\[
 A=(1-s)\Ric,\qquad E=B-\theta,\qquad
 \ddbar w=B+\ddbar v=I+E-A.
\]
The curvature and complex Hessian terms satisfy
\begin{align*}
 \tfrac12|A|^2-\tr A-|I+E-A|^2
 &=-\tfrac12|A|^2+\ip{A}{I+2E}-|I+E|^2\\
 &=-\tfrac12|A-(I+2E)|^2
   +\tfrac12|I+2E|^2-|I+E|^2,\\
 \tfrac12|I+2E|^2-|I+E|^2
 &=\tfrac12(n+4\tr E+4|E|^2)-(n+2\tr E+|E|^2)\\
 &=|E|^2-\tfrac n2.
\end{align*}
Consequently,
\begin{equation}\label{eq:cancellation}
 \begin{split}
 \square V={}&-|\nabla^{2,0}w|^2
 -\tfrac12|(1-s)\Ric-(\omega+2B-2\theta)|^2\\
 &+|B-\theta|^2-\tfrac n2
   +2\operatorname{Re}\ip{\partial\tr_\omega(\theta)}{\partial w}.
 \end{split}
\end{equation}
Write $R\ge-C_R\lambda$, choose $A_0\ge2/c$ and $C_0\ge2C_R$, and set
\[
 W=V+A_0\tr_\omega(\theta)+C_0\lambda.
\]
For $-1\le s\le0$, we have
\[
 W-e\ge-2C_R\lambda+C_0\lambda\ge0,
\]
and Young's inequality gives
\[
 2\operatorname{Re}\ip{\partial\tr_\omega(\theta)}{\partial w}
 \le\frac{cA_0}{2}|\partial\tr_\omega(\theta)|^2
       +\frac{2}{cA_0}e.
\]
Combining \eqref{eq:scaledtrace} and \eqref{eq:cancellation}, we find
\begin{equation}\label{eq:Winequality}
 \begin{aligned}
 \square W
 &\le2|B|^2+2|\theta|^2
       -\frac{cA_0}{2}|\partial\tr_\omega(\theta)|^2
       +\frac{2}{cA_0}e+CA_0\lambda\\
 &\le2|B|^2+e+C
 \le W+2|B|^2+C.
 \end{aligned}
\end{equation}

\smallskip
\Needspace{5\baselineskip}
\noindent\emph{Step 4.}
For $-1\le a<0$, put $S(a)=\int_XR(a)\,d\nu_a$. By heat duality and
$\nu_a(X)=1$,
\[
 S'(a)=\int_X|\Ric|^2\,d\nu_a
 \ge\frac1n\int_XR^2\,d\nu_a
 \ge\frac1n\left(\int_XR\,d\nu_a\right)^2
 =\frac1nS(a)^2.
\]
If $S(a)>0$, then $S(s)\ge S(a)>0$ for $a\le s\le0$, and
\[
 \left(\frac1S\right)'\le-\frac1n,
 \qquad
 \frac1{S(a)}\ge\frac1{S(0)}+\frac{-a}{n}\ge\frac{-a}{n}.
\]
The resulting bound also holds when $S(a)\le0$. Thus
\[
 S(a)\le\frac{n}{-a}\quad(-1\le a<0),\qquad
 S(a)\le2n\quad(-1\le a\le-1/2).
\]
For $-1\le a\le-1/2$, the trace bound gives
\[
 \begin{aligned}
 \int_XW(a)\,d\nu_a
 &=\frac{(1-a)^2}{2}S(a)+\int_Xe(a)\,d\nu_a
       +A_0\int_X\tr_{\omega(a)}(\theta)\,d\nu_a+C_0\lambda\\
 &\le C+\int_Xe(a)\,d\nu_a.
 \end{aligned}
\]
Integrating and using \eqref{eq:wenergy}, we obtain
\[
 \int_{-1}^{-1/2}\!\int_XW(a)\,d\nu_a\,da
 \le C+\int_{-1}^0\!\int_X|\partial w|^2\,d\nu_s\,ds\le C.
\]
Moreover, \eqref{eq:Winequality} implies
\[
 \frac{d}{ds}\left(\exp(-s)\int_XW(s)\,d\nu_s\right)
 \le \exp(-s)\left(2\int_X|B|^2\,d\nu_s+C\right).
\]
Integrating from $a$ to $0$ gives
\[
 W(x,0)\le \exp(-a)\int_XW(a)\,d\nu_a
 +2\int_a^0\exp(-s)\!\int_X|B|^2\,d\nu_s\,ds
 +C\int_a^0\exp(-s)\,ds.
\]
Average over $a\in[-1,-1/2]$ and apply \eqref{eq:Bhessian}:
\[
 \begin{aligned}
 W(x,0)
 &\le2\exp(1)\int_{-1}^{-1/2}\!\int_XW(a)\,d\nu_a\,da
       +2\exp(1)\int_{-1}^0\!\int_X|B|^2\,d\nu_s\,ds+C\\
 &\le C\left(1+\lambda^{-2}D_{(x,t)}(\lambda)\right).
 \end{aligned}
\]
Finally, returning to the original scalar curvature,
\[
 \lambda R_{\mathrm{original}}(x,t)=R(x,0)
 \le2W(x,0)
 \le C\left(1+\frac{D_{(x,t)}(\lambda)}{\lambda^2}\right),
\]
which proves \eqref{eq:scalarreduction}.
\end{proof}

\section{A vanishing lemma}\label{sec:vanishing}

In general, the Hodge heat flow does not exactly preserve the condition
$\delta_{f(s)}\beta(s)=0$. The next calculation explains why the
weighted divergence condition imposed initially can nevertheless be used
at later times.

\begin{lemma} \label{lem:divergence}
Let $\beta$ be a solution of the Hodge heat equation on a real Ricci
flow, equipped with a conjugate heat measure based at time $0$,
and write $\tau=-s>0$.
If $\tau\square h=h-c$ for some constant $c$, then
\begin{equation}\label{eq:divergence}
 \square(\tau\delta_h\beta)
 =-2\tau\ip{\Ric_g+\nabla^2h-g/(2\tau)}{\nabla \beta}.
\end{equation}
Here $\beta$ need not be closed.
\end{lemma}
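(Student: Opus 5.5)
This is a direct computation, which I will organize around three commutator identities. Write $\delta_h\beta=\delta\beta+\langle\nabla h,\beta\rangle=-\nabla^a\beta_a+\nabla^a h\,\beta_a$. I will compute $\square(\delta\beta)$ and $\square\langle\nabla h,\beta\rangle$ separately along the Ricci flow $\partial_sg=-2\Ric$, using the Hodge heat equation in Weitzenb\"ock form $\square\beta=-\Ric^\#\beta$ from the proof of Lemma~\ref{lem:hodgeheat}.

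\emph{Step 1 (divergence commutes with the Hodge heat flow).} Since $\delta\Delta_d=\Delta_d\delta$ at each fixed time, only the time variation of $\delta$ itself contributes. Under Ricci flow, $\partial_s(g^{ab}\Gamma^c_{ab})=0$ by the contracted Bianchi identity; the Christoffel variation $\nabla\Ric$-terms cancel against $\frac12\nabla R$. Hence $\partial_s\delta\beta=\delta(\partial_s\beta)+2\langle\Ric,\nabla\beta\rangle$, with sign to be checked carefully. Combining this with $\Delta_d\delta\beta=-\Delta\delta\beta$ gives
\[
 \square\delta\beta=-2\langle\Ric,\nabla\beta\rangle.
\]

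\emph{Step 2 (the drift term).} For $\square\langle\nabla h,\beta\rangle$ I will expand the product:
\[
 \square\langle\nabla h,\beta\rangle=\langle\square^{\rm form}dh,\beta\rangle+\langle dh,\square^{\rm form}\beta\rangle-2\langle\nabla^2h,\nabla\beta\rangle+2\Ric(\nabla h,\beta),
\]
where the last term comes from $\partial_sg^{-1}$. The forms $dh$ and $\beta$ both evolve by $\partial_s=\Delta^{\rm rough}-\Ric^\#$ up to the inhomogeneity in $dh$. Indeed, $d$ commutes with $\square$ modulo the same Ricci term, so $\partial_s dh+\Delta_d dh=d\square h=d\bigl((h-c)/\tau\bigr)=dh/\tau$, since $c$ and $\tau$ are spatially constant. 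The two $-\Ric(\cdot,\cdot)$ terms cancel against $+2\Ric(\nabla h,\beta)$. This leaves
\[
 \square\langle\nabla h,\beta\rangle=\tau^{-1}\langle dh,\beta\rangle-2\langle\nabla^2h,\nabla\beta\rangle.
\]

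\emph{Step 3 (the $\tau$ factor).} Because $\partial_s\tau=-1$,
\[
 \square(\tau X)=\tau\square X-X.
\]
Applying this with $X=\delta_h\beta$ and using Steps 1 and 2,
\[
 \square(\tau\delta_h\beta)=-2\tau\langle\Ric+\nabla^2h,\nabla\beta\rangle+\langle dh,\beta\rangle-\delta\beta-\langle dh,\beta\rangle.
\]
Finally $-\delta\beta=\tr_g\nabla\beta=\langle g,\nabla\beta\rangle=2\tau\langle g/(2\tau),\nabla\beta\rangle$. This yields \eqref{eq:divergence} without using $d\beta=0$, since only the symmetric part of $\nabla\beta$ pairs with the symmetric tensors.

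\emph{Main obstacle.} The main difficulty is the bookkeeping of signs and factors of $2$ under the conventions \eqref{eq:conventions}, especially in Step 1's variation of $\delta$ under $\partial_tg=-2\Ric$. I would verify that step first on the model case $\beta=du$ with $\square u=0$. There $\delta\beta=-\Delta u$, and the known identity $\square\Delta u=2\langle\Ric,\nabla^2u\rangle$ for Ricci flow fixes the sign.
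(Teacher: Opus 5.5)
Your proposal is correct and follows essentially the same route as the paper: split $\delta_h\beta=\delta\beta+\ip{dh}{\beta}$, get $\square\delta\beta=-2\ip{\Ric_g}{\nabla\beta}$ from the contracted Bianchi identity and $\delta\Delta_d=\Delta_d\delta$, get $\square\ip{dh}{\beta}=\ip{d\square h}{\beta}-2\ip{\nabla^2h}{\nabla\beta}$ from the Weitzenb\"ock/Bochner formulas with the Ricci terms cancelled by $\partial_sg^{-1}$, and finish with $\tau'=-1$ and $-\delta\beta=\tr_g\nabla\beta$. The only slip is the intermediate formula in Step 1, which should read $\partial_s\delta\beta=\delta(\partial_s\beta)-2\ip{\Ric_g}{\nabla\beta}$ because $\partial_sg^{ij}=2R^{ij}$ and $\delta\beta=-g^{ij}\nabla_i\beta_j$; your stated conclusion $\square\delta\beta=-2\ip{\Ric_g}{\nabla\beta}$ and your model check with $\beta=du$ agree with this corrected sign.
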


\begin{proof}
In real coordinates, $\delta \beta=-g^{ij}\nabla_i \beta_j$. Along the
Ricci flow,
\[
 \partial_s\Gamma^k_{ij}
 =-g^{ka}(\nabla_i\Ric_{ja}+\nabla_j\Ric_{ia}-\nabla_a\Ric_{ij}).
\]
The contraction of this expression in $i,j$ vanishes by the contracted
Bianchi identity. Since $\delta\Delta_d=\Delta_d\delta$ at each fixed
time, we obtain
\begin{equation}\label{eq:divunweighted}
 \square(\delta \beta)=-2\ip{\Ric_g}{\nabla \beta}.
\end{equation}
For the other term in $\delta_h\beta=\delta \beta+\ip{dh}{\beta}$, the
identities $\Delta^{\rm rough}dh=d\Delta h+\Ric_g^{\#}dh$ and
$\partial_s\beta=\Delta^{\rm rough}\beta-\Ric_g^{\#}\beta$ give
\begin{equation}\label{eq:divweighted}
 \square\ip{dh}{\beta}=\ip{d\square h}{\beta}-2\ip{\nabla^2h}{\nabla \beta}.
\end{equation}
Here again, the variation of the inverse metric cancels the Ricci terms.
Multiplying the sum of \eqref{eq:divunweighted} and
\eqref{eq:divweighted} by $\tau$, using $\tau'=-1$, and noting that
$-\delta \beta=\tr_g\nabla \beta$, we obtain
\[
 \square(\tau\delta_h\beta)
 =-2\tau\ip{\Ric_g+\nabla^2h-g/(2\tau)}{\nabla \beta}
      +\ip{\tau d\square h-dh}{\beta}.
\]
The last term vanishes by the equation assumed for $h$.
\end{proof}

\begin{lemma} \label{lem:vanish}
In the setting of Theorem~\ref{thm:compactness}, let $\beta_i$ be
solutions of the Hodge heat equation on $[-8,-1/2]$ satisfying
\[
 \|\beta_i(-8)\|_{L^2(\nu_{i,-8})}=1,
 \qquad \delta_{f_i(-8)}\beta_i(-8)=0.
\]
Suppose that the $\beta_i$ converge smoothly on compact subsets of the
regular spacetime for $-8<s<-1/2$, and that their limit at $s=-1$ is
closed. Then
\begin{equation}\label{eq:strongvanishing}
 \|\beta_i(-1)\|_{L^2(\nu_{i,-1})}\longrightarrow0.
\end{equation}
\end{lemma}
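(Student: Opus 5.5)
The plan is to show that the limit $\beta_\infty$ of the $\beta_i$ at $s=-1$ is closed and weighted coclosed on the regular part of the shrinker, and that a weighted Bochner argument then forces $\beta_\infty=0$. To pass from this to \eqref{eq:strongvanishing}, I will use hypercontractivity to rule out loss of $L^2$ mass in the limit.

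\emph{Step 1 (uniform bounds).} By Lemma~\ref{lem:hodgeheat}, $|\beta_i|$ is a nonnegative heat subsolution. Lemma~\ref{lem:heat}, with $p=2$ and $q=8$ on $[-8,-1]$, then gives $\|\beta_i(-1)\|_{L^8(\nu_{i,-1})}\le1$. The energy identity \eqref{eq:hodgeenergy} gives
\[
 \sup_s\|\beta_i(s)\|_{L^2(\nu_{i,s})}\le1,\qquad \int_{-8}^{-1}\!\int_X|\nabla\beta_i|^2\,d\nu_{i,s}\,ds\le\tfrac12.
\]
The $L^8$ bound makes $|\beta_i(-1)|^2$ uniformly integrable with respect to $\nu_{i,-1}$. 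Property (i) of Theorem~\ref{thm:compactness} gives $\nu_{-1}(\RR_{-1})=1$, and the limiting measures are tight on bounded balls. Together with smooth convergence on compact subsets of $\RR$, these should give
\[
 \|\beta_i(-1)\|_{L^2(\nu_{i,-1})}\to\|\beta_\infty(-1)\|_{L^2(\nu_{-1})}.
\]
Matching the Bamler-type convergence of measures with uniform integrability is a technical point. I expect it to follow from the $L^8$ bound together with the fact that the mass of $\nu_{i,-1}$ outside large compact subsets of the regular part is uniformly small. It therefore suffices to prove $\beta_\infty(-1)=0$.

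\emph{Step 2 (approximate divergence-freeness).} Take the strong potentials $h_i$ of Theorem~\ref{thm:compactness}(ii) on $I=[-8,-1]$, and set $\psi_i=\tau\delta_{h_i}\beta_i$. Lemma~\ref{lem:divergence} gives $\square\psi_i=G_i$ with
\[
 G_i=-2\tau\ip{\Ric_{g_i}+\nabla^2h_i-g_i/(2\tau)}{\nabla\beta_i}.
\]
Applying Kato's inequality to $|\psi_i|$ together with the duality \eqref{eq:duality}, I get
\[
 \int|\psi_i(-1)|\,d\nu_{i,-1}\le\int|\psi_i(-8)|\,d\nu_{i,-8}+\int_{-8}^{-1}\!\int|G_i|\,d\nu_{i,s}\,ds.
\]
The last term tends to zero by Cauchy--Schwarz, combining \eqref{eq:strongerror} with the energy bound of Step 1. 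At $s=-8$ the hypothesis $\delta_{f_i}\beta_i=0$ gives $\psi_i=8\ip{d(h_i-f_i)}{\beta_i}$. This tends to zero in $L^1$ by \eqref{eq:stronggradient} and $\|\beta_i(-8)\|_2=1$. Applying \eqref{eq:stronggradient} once more at $s=-1$, I conclude that $\delta_{f_i}\beta_i(-1)\to0$ in $L^1(\nu_{i,-1})$. Passing to the limit on compact subsets of $\RR_{-1}$ gives $\delta_f\beta_\infty=0$ there. Since $d\beta_\infty=0$ by hypothesis, $\beta_\infty$ is weighted harmonic on $\RR_{-1}$.

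\emph{Step 3 (Bochner vanishing; main obstacle).} The difficulty is that the shrinker is singular and noncompact, and no a priori control of $\nabla\beta_\infty$ on the slice $s=-1$ is available. I would test the equation $(d\delta_f+\delta_fd)\beta_\infty=0$ against $\phi^2\beta_\infty$, where $\phi=\rho_R\chi_\eps$ comes from Theorem~\ref{thm:compactness}(iii). The weighted Weitzenb\"ock formula, combined with the soliton identity $\Ric+\nabla^2f=g/2$ at $|s|=1$, yields
\[
 \int\phi^2\bigl(|\nabla\beta_\infty|^2+\tfrac12|\beta_\infty|^2\bigr)d\nu\le2\int\phi|d\phi||\beta_\infty||\nabla\beta_\infty|\,d\nu.
\]
After absorbing the right-hand side, this gives
\[
 \int\phi^2|\beta_\infty|^2d\nu\le C\int|d\phi|^2|\beta_\infty|^2d\nu.
\]
For the $\rho_R$ factor, the right-hand side is at most $CR^{-2}$, using $\|\beta_\infty\|_{L^2(\nu)}\le1$. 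For the $\chi_\eps$ factor on a fixed ball $B$, I would apply H\"older's inequality:
\[
 \int|d\chi_\eps|^2|\beta_\infty|^2\,d\nu\le\|d\chi_\eps\|_{L^3(B)}^2\bigl\|\,|\beta_\infty|^2e^{-f}\bigr\|_{L^3(B,dV)}.
\]
The last factor is bounded, because the heat density is locally bounded and Fatou's lemma transfers the $L^8$ bound of Step 1 to $\beta_\infty$. Letting $\eps\to0$ and then $R\to\infty$ gives $\beta_\infty(-1)=0$, and Step 1 then yields \eqref{eq:strongvanishing}.
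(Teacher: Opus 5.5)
Your proposal is correct and follows essentially the same route as the paper. The shared ingredients are: approximate weighted co-closedness from Lemma~\ref{lem:divergence} with the strong potentials $h_i$; a cutoff weighted Bochner argument with $\rho_R\chi_\eps$ on the limiting shrinker; and the hypercontractive $L^8$ bound to rule out loss of mass. Your H\"older pairing of $\|d\chi_\eps\|_{L^3(dV)}$ against $|\beta_\infty|^2e^{-f}$ is an equally valid variant of the paper's $L^8(\mu)$--$L^{8/3}(\mu)$ split.

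The step you flag as technical in Step 1 is settled in the paper exactly as you anticipate. Take compact sets $K_j\subset\RR_{-1}$ with $\nu_{-1}(K_j)\to1$. On $K_j$, smooth convergence controls the integrals and gives $\nu_{i,-1}(K_{j,i})\to\nu_{-1}(K_j)$. On the complement, H\"older with exponents $4$ and $4/3$ bounds the $L^2$ mass by $\nu_{i,-1}(X\setminus K_{j,i})^{3/4}\to(1-\nu_{-1}(K_j))^{3/4}$.
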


\begin{proof}
We first show that the limiting form is weighted co-closed.
The energy identity gives
\begin{equation}\label{eq:bi-energy}
 \sup_{[-8,-1]}\|\beta_i\|_2\le1,
 \qquad\int_{-8}^{-1}\!\int|\nabla \beta_i|^2d\nu_{i,s}\,ds\le\tfrac12.
\end{equation}
We take the potentials $h_i$ on a larger interval, such as
$[-16,-1/2]$. The initial divergence then satisfies
\[
 \|8\delta_{h_i}\beta_i(-8)\|_1
 =8\|\ip{d(h_i-f_i)}{\beta_i}(-8)\|_1
 \le8\|d(h_i-f_i)(-8)\|_2\longrightarrow0.
\]
Applying the scalar absolute-value inequality to \eqref{eq:divergence},
integrating by heat duality, and using the Cauchy-Schwarz inequality in
spacetime, we obtain
\begin{align*}
 \|\delta_{h_i}\beta_i(-1)\|_1
 &\le 8\|d(h_i-f_i)(-8)\|_2\\
 &\quad+16\left(\int_{-8}^{-1}\!\int
    |\Ric_{g_i}+\nabla^2h_i-g_i/(2\tau)|^2d\nu_{i,s}\,ds\right)^{1/2}
 \\[-2pt]
 &\hspace{26mm}\cdot
    \left(\int_{-8}^{-1}\!\int|\nabla \beta_i|^2d\nu_{i,s}\,ds\right)^{1/2}
 \longrightarrow0.
\end{align*}
Passing from $h_i$ back to $f_i$ at time $-1$ costs at most
$\|d(h_i-f_i)(-1)\|_2\|\beta_i(-1)\|_2=o(1)$. Thus
$\|\delta_{f_i}\beta_i(-1)\|_1\to0$, and by smooth local convergence,
$\delta_f \beta=0$ on the limiting regular slice. We emphasize that this
is real weighted co-closedness; no statement about a complex weighted
adjoint is inferred from it.

Next, we apply the weighted Bochner formula on the limiting slice.
By Lemma~\ref{lem:hodgeheat}, $F_i=|\beta_i|$ is a nonnegative heat
subsolution. In Lemma~\ref{lem:heat}, choose
\[
 a=-8,\qquad b=-1,\qquad p=2,\qquad q=8.
\]
The required condition holds because
\[
 \frac{|a|}{|b|}=8\ge\frac{q-1}{p-1}=7.
\]
Thus \eqref{eq:hyper} and the normalization at time $-8$ give
\begin{equation}\label{eq:L8}
 \|\beta_i(-1)\|_{L^8(\nu_{i,-1})}
 \le\|\beta_i(-8)\|_{L^2(\nu_{i,-8})}=1.
\end{equation}
Write $\mu=\nu_{-1}$ on the limiting slice. By local convergence and
exhaustion, $\beta\in L^8(\mu)$, and hence $\beta\in L^2(\mu)$.
Moreover, $d\beta=0$ by assumption, and $\delta_f\beta=0$ by the
preceding paragraph. The weighted Weitzenb\"ock formula and the soliton
equation then imply
\begin{equation}\label{eq:weightedbochner}
 0=(d\delta_f+\delta_fd)\beta
 =\nabla_f^*\nabla \beta+(\Ric_g+\nabla^2f)^{\#}\beta
 =\nabla_f^*\nabla \beta+\tfrac12\beta.
\end{equation}
Let $\chi$ be a smooth cutoff function with compact support in the
regular part. Multiplying by $\chi^2\beta$, integrating, and bounding
the cross term by Young's inequality, we obtain
\begin{equation}\label{eq:cutoffbochner}
 \tfrac12\int\chi^2|\nabla \beta|^2d\mu
 +\tfrac12\int\chi^2|\beta|^2d\mu
 \le2\int|d\chi|^2|\beta|^2d\mu.
\end{equation}
We now take $\chi=\rho_R\chi_\eps$, with the cutoffs of
Theorem~\ref{thm:compactness}. For fixed $R$, H\"older's inequality
gives
\[
 \int_{\operatorname{supp}\rho_R}|d\chi_\eps|^2|\beta|^2d\mu
 \le\|\beta\|_{L^8(\mu)}^2
       \|d\chi_\eps\|_{L^{8/3}(\operatorname{supp}\rho_R,\mu)}^2
 \longrightarrow0.
\]
Here the convergence in $L^{8/3}$ follows from the cutoff estimate in
$L^3(dV_g)$, the finiteness of the volume, and the local upper bound for
the heat density. The outer cutoff contributes at most
$CR^{-2}\|\beta\|_2^2$. Letting first $\eps\to0$ and then $R\to\infty$
in \eqref{eq:cutoffbochner}, we conclude that $\beta=0$. Note that all
integrations by parts took place on smooth compact subsets of the
regular locus.

Finally, we prove the strong convergence of the weighted norms; the
vanishing of the smooth limit alone would not imply
\eqref{eq:strongvanishing}. Choose compact subsets $K_j$ of the regular
part with $\mu(K_j)\to1$, and let $K_{j,i}$ be their images in the
approximating flows. By smooth convergence,
\[
 \nu_{i,-1}(K_{j,i})\to\mu(K_j),\qquad
 \int_{K_{j,i}}|\beta_i(-1)|^2d\nu_{i,-1}\to0.
\]
On the complement $E_{j,i}=X\setminus K_{j,i}$, H\"older's inequality
with conjugate exponents $4$ and $4/3$, followed by \eqref{eq:L8}, gives
\[
 \begin{aligned}
 \int_{E_{j,i}}|\beta_i(-1)|^2d\nu_{i,-1}
 &\le\left(\int_{E_{j,i}}|\beta_i(-1)|^8d\nu_{i,-1}\right)^{1/4}
       \nu_{i,-1}(E_{j,i})^{3/4}\\
 &\le\nu_{i,-1}(E_{j,i})^{3/4}.
 \end{aligned}
\]
Combining the estimates on $K_{j,i}$ and $E_{j,i}$, we obtain
\[
 \limsup_{i\to\infty}\|\beta_i(-1)\|_{L^2(\nu_{i,-1})}^2
 \le\bigl(1-\mu(K_j)\bigr)^{3/4}\longrightarrow0
 \quad\text{as }j\to\infty.
\]
This proves \eqref{eq:strongvanishing}.
\end{proof}

\section{Proof of Theorem 1.1}\label{sec:proof}

We now return to the original normalized flow and to the definition
\eqref{eq:defect}. Throughout this section, all comparisons are made
with the base point fixed.

\begin{lemma} \label{lem:coarse}
For every $0<\tau\le t/8$,
\begin{equation}\label{eq:coarse}
 D_p(\tau)\le C\tau,\qquad
 D_p(\tau)\le\tfrac14D_p(8\tau)+C\tau^2.
\end{equation}
\end{lemma}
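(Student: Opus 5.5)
The first inequality in \eqref{eq:coarse} is immediate from the gradient bound \eqref{eq:basicU}. I would take $\zeta=0$ in \eqref{eq:defect}. Since $|\dc U|_g^2=|\partial U|_\omega^2\le C$ by \eqref{eq:norms}, and since $|\cdot|^2_{g/\tau}=\tau|\cdot|_g^2$ on one-forms, we get $D_p(\tau)\le\tau\int_X|\dc U(s)|^2_{g(s)}\,d\nu_{x,t;s}\le C\tau$, because $\nu_{x,t;s}$ is a probability measure.

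For the second inequality I would run a purely linear Hodge heat flow comparison, without any compactness. Put $s'=t-8\tau\ge0$, which is where the hypothesis $\tau\le t/8$ is used, and $s=t-\tau$. At time $s'$, apply Lemma~\ref{lem:projection} to the measure $\nu_{x,t;s'}$ to write $\dc U(s')=\zeta'+\beta'$ with $\zeta'$ smooth and closed and
\[
 8\tau\int_X|\beta'|^2_{g(s')}\,d\nu_{x,t;s'}=D_p(8\tau).
\]
Evolve by the Hodge heat propagator: $\zeta(s)=\HH_{s',s}\zeta'$ remains closed by Lemma~\ref{lem:hodgeheat}, and $\beta(s)=\HH_{s',s}\beta'$. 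Since $\square U=\tr_\omega(\theta)$, the commutation \eqref{eq:commute} together with the scalar Duhamel formula \eqref{eq:DuhamelScalar} gives
\[
 \dc U(s)=\zeta(s)+\beta(s)+\int_{s'}^s\dc\,\Phi_{r,s}\tr_{\omega(r)}(\theta)\,dr .
\]
The closed form $\zeta(s)$ is then an admissible competitor in \eqref{eq:defect} at scale $\tau$.

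Next I estimate the two remaining terms.
\begin{enumerate}
 \item By the energy identity \eqref{eq:hodgeenergy}, $\int|\beta(s)|_g^2\,d\nu_{x,t;s}\le\int|\beta'|_g^2\,d\nu_{x,t;s'}$. Hence $\tau\int|\beta(s)|^2_g\,d\nu_{x,t;s}\le D_p(8\tau)/8$.
 \item By \eqref{eq:heatgradient} and $0\le\tr_\omega(\theta)\le C$, the Duhamel term has pointwise $g$-norm at most $\int_{s'}^sC(s-r)^{-1/2}\,dr\le C\sqrt\tau$. Its squared $g/\tau$-norm is therefore at most $C\tau^2$.
\end{enumerate}
Using $|a+b|^2\le2|a|^2+2|b|^2$, these combine to
\[
 D_p(\tau)\le2\cdot\tfrac18D_p(8\tau)+C\tau^2=\tfrac14D_p(8\tau)+C\tau^2 .
\]

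The only delicate points are bookkeeping. First, the factor $8$ gained from the rescaling of the norm $g/\tau$ versus $g/(8\tau)$ is exactly what absorbs the loss of $2$ in the triangle inequality. Second, Duhamel's principle must be applied to one-forms, which is justified by \eqref{eq:commute}: $\dc\Phi_{s',s}U(s')$ and $\HH_{s',s}\dc U(s')$ agree, and likewise for the forcing term. I do not expect a real obstacle. The genuine improvement of the contraction constant will instead come later from Lemma~\ref{lem:vanish}, where $\delta_f\beta'=0$ is used.
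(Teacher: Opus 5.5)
Your proposal is correct and follows the paper's proof essentially step for step. For the first bound you use the competitor $\zeta=0$. For the second you take the weighted Hodge projection at time $t-8\tau$, evolve both pieces by the Hodge heat flow, combine \eqref{eq:commute} with scalar Duhamel, and bound the pieces by the energy identity and the heat-gradient estimate; the factor $8$ from the norm $g/(8\tau)$ absorbs the factor $2$ from the triangle inequality.

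The only difference is cosmetic. The paper first rescales parabolically by $\tau$, so the comparison runs from time $-8$ to $-1$, while you work in the original time scale. The two are equivalent, since \eqref{eq:heatgradient} and the weighted norms are scale-invariant.
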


\begin{proof}
Fix $p=(x,t)$. Since $U$ is real, \eqref{eq:norms} gives
\[
 (\dc U)^{0,1}=\sqrt{-1}\bar\partial U,\qquad
 |\dc U|_g^2=|(\dc U)^{0,1}|_\omega^2
 =|\bar\partial U|_\omega^2=|\partial U|_\omega^2.
\]
For any one-form $\beta$, we also have
\[
 (g/\tau)^{-1}=\tau g^{-1},\qquad
 |\beta|_{g/\tau}^2=\tau|\beta|_g^2.
\]
Taking $\zeta=0$ in the infimum \eqref{eq:defect}, we obtain
\[
 \begin{aligned}
 D_p(\tau)
 &\le\int_X|\dc U(t-\tau)|_{g(t-\tau)/\tau}^2\,d\nu_{x,t;t-\tau}\\
 &=\tau\int_X|\partial U(t-\tau)|_{\omega(t-\tau)}^2\,d\nu_{x,t;t-\tau}\\
 &\le C\tau\int_Xd\nu_{x,t;t-\tau}=C\tau,
 \end{aligned}
\]
where we used \eqref{eq:basicU} and $\nu_{x,t;t-\tau}(X)=1$.

For the second inequality, set
\[
 g_\tau(s)=\tau^{-1}g(t+\tau s),\qquad
 \nu_s=\nu_{x,t;t+\tau s}.
\]
Below, norms use $g_\tau(s)$ and $\nu_s$ at the indicated time, and $U$
denotes the pullback of the original potential. Rescale both the
K\"ahler form and the reference form by $\tau^{-1}$, writing them as
$\omega(s)$ and $\theta$; then $\tr_\omega(\theta)$ is unchanged and
$\square U=\tau\tr_\omega(\theta)$.

At time $-8$, choose the minimizing decomposition
$\dc U(-8)=\zeta+\beta(-8)$ from Lemma~\ref{lem:projection}, with
$d\zeta=0$. Since the conjugate heat measure is unchanged and
$g_\tau(-8)=8g(t-8\tau)/(8\tau)$, the squared norms are divided by eight:
\begin{equation}\label{eq:factor8}
 \|\beta(-8)\|_2^2=\tfrac18D_p(8\tau).
\end{equation}

The scalar heat propagator satisfies
\[
 \partial_a\Phi_{a,-1}=-\Phi_{a,-1}\Delta_{g_\tau(a)},\qquad
 \Phi_{-1,-1}=\operatorname{Id}.
\]
Hence
\[
 \begin{aligned}
 \frac{d}{da}\bigl(\Phi_{a,-1}U(a)\bigr)
 &=\Phi_{a,-1}(\partial_a-\Delta_{g_\tau(a)})U(a)\\
 &=\tau\Phi_{a,-1}\bigl(\tr_{\omega(a)}(\theta)\bigr),
 \end{aligned}
\]
and integration gives
\[
 U(-1)-\Phi_{-8,-1}U(-8)
 =\tau\int_{-8}^{-1}\Phi_{a,-1}
       \bigl(\tr_{\omega(a)}(\theta)\bigr)\,da.
\]
Applying $\dc$ and using \eqref{eq:commute}, we obtain
\begin{equation}\label{eq:duhamel}
 \begin{aligned}
 \dc U(-1)&=\HH_{-8,-1}\zeta+\HH_{-8,-1}\beta(-8)+\xi_\tau,\\
 \xi_\tau&=\tau\int_{-8}^{-1}\dc\Phi_{a,-1}
       \bigl(\tr_{\omega(a)}(\theta)\bigr)\,da.
 \end{aligned}
\end{equation}
By \eqref{eq:heatgradient} and the trace bound,
\begin{equation}\label{eq:duhamelerror}
 \|\xi_\tau\|_\infty
 \le C\tau\int_{-8}^{-1}(-1-a)^{-1/2}\,da
 =2\sqrt7\,C\tau\le C\tau.
\end{equation}
Since $\nu_{-1}(X)=1$, it follows that $\|\xi_\tau\|_2^2\le C\tau^2$.

Set $\zeta(s)=\HH_{-8,s}\zeta$ and
$\beta(s)=\HH_{-8,s}\beta(-8)$. Lemma~\ref{lem:hodgeheat} gives
\[
 d\zeta(-1)=0,\qquad
 \|\beta(-1)\|_2^2
 +2\int_{-8}^{-1}\!\int_X|\nabla\beta(s)|^2\,d\nu_s\,ds
 =\|\beta(-8)\|_2^2.
\]
Since $d\zeta(-1)=0$, the definition \eqref{eq:defect} gives
\[
 \begin{aligned}
 D_p(\tau)
 &\le\|\dc U(-1)-\zeta(-1)\|_2^2
 =\|\beta(-1)+\xi_\tau\|_2^2\\
 &\le2\|\beta(-1)\|_2^2+2\|\xi_\tau\|_2^2\\
 &\le2\|\beta(-8)\|_2^2+C\tau^2
 =\tfrac14D_p(8\tau)+C\tau^2.
 \end{aligned}
\]
\end{proof}

\begin{proposition} \label{prop:improvement}
For every $\eps>0$, there exist $K>8$, $\eta>0$, $C_\eps<\infty$ and
$\tau_*>0$ such that, with $p=(x,t)$ and $\lambda=1-t$,
\begin{equation}\label{eq:improvement}
 D_p(\tau)\le\eps D_p(8\tau)+C_\eps(\lambda+\tau)^2
\end{equation}
whenever $\tau\le \tau_*$, $K\tau\le t$, and
\begin{equation}\label{eq:goodscale}
 \Nash_p(\tau/K)-\Nash_p(K\tau)\le\eta.
\end{equation}
The constants are independent of the base point.
\end{proposition}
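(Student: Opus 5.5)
We argue by contradiction and compactness, following the structure of the proof of Lemma~\ref{lem:coarse} but replacing the crude energy bound $\|\beta(-1)\|_2\le\|\beta(-8)\|_2$ by the vanishing Lemma~\ref{lem:vanish}. Fix $\eps>0$ and suppose no choice of $K,\eta,C_\eps,\tau_*$ works. Then we obtain base points $p_i=(x_i,t_i)$ and scales $\tau_i\to0$, with $K_i\to\infty$, $K_i\tau_i\le t_i$ and $\Nash_{p_i}(\tau_i/K_i)-\Nash_{p_i}(K_i\tau_i)\to0$. These sequences satisfy
\[
 D_{p_i}(\tau_i)>\eps D_{p_i}(8\tau_i)+i(\lambda_i+\tau_i)^2 .
\]
Since $D_{p_i}(\tau_i)\le C\tau_i\to0$ while the Nash entropy is monotone and bounded, the hypotheses of Theorem~\ref{thm:compactness} hold. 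After extracting a subsequence, we may also assume $t_i\to1$, since $\lambda_i\to0$ is forced by $D\le C\tau$ together with $i\lambda_i^2< C\tau_i$.

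In the rescaled picture at scale $\tau_i$, write $\dc U(-8)=\zeta_i+\beta_i(-8)$ using Lemma~\ref{lem:projection}, so that $\delta_{f_i(-8)}\beta_i(-8)=0$. Equations~\eqref{eq:duhamel} and \eqref{eq:duhamelerror} give $D_{p_i}(\tau_i)\le(1+\sigma)\|\beta_i(-1)\|_2^2+C_\sigma\tau_i^2$ for any $\sigma>0$. It therefore suffices to show that $\|\beta_i(-1)\|_2^2=o(\|\beta_i(-8)\|_2^2)+O((\lambda_i+\tau_i)^2)$. The comparison $\|\beta_i(-8)\|_2^2=\tfrac18D_{p_i}(8\tau_i)$ is already recorded in \eqref{eq:factor8}. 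The contradiction hypothesis also gives $\|\beta_i(-8)\|_2^2\gtrsim i(\lambda_i+\tau_i)^2$, because $D_{p_i}(\tau_i)\le 2\|\beta_i(-1)\|_2^2+C\tau_i^2\le 2\|\beta_i(-8)\|_2^2+C\tau_i^2$. We normalize $\tilde\beta_i=\beta_i/\|\beta_i(-8)\|_2$, extended by the Hodge heat flow to $[-8,-1/2]$. The energy identity \eqref{eq:hodgeenergy}, together with local parabolic estimates on the regular part, yields smooth subsequential convergence of $\tilde\beta_i$ on compact subsets of $\RR$. This gives every hypothesis of Lemma~\ref{lem:vanish} except closedness of the limit at $s=-1$.

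The main step is to show that the limit is closed. The key observation is that $d\beta_i(s)=d\,\dc U(s)-d\xi$-type terms: more precisely, $d\beta_i(s)=\HH_{-8,s}(d\,\dc U(-8))$. By \eqref{eq:duhamel} at time $s$, this equals $d\,\dc U(s)$ minus $d$ of the Duhamel error. Now $d\,\dc U=2\ddbar U$, and by \eqref{eq:potential} in the rescaled variables,
\[
 \ddbar U=\tau_i\bigl(\omega-\theta-(\lambda_i/\tau_i+|s|)\Ric\bigr)\quad\text{up to the rescaling of }\omega .
\]
Concretely, $\ddbar U$ is bounded in the rescaled norm by $C\tau_i\bigl(1+|\theta|+(\lambda_i/\tau_i+8)|\Ric|\bigr)$. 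On compact subsets of $\RR$ the rescaled curvature is bounded, so $|d\beta_i|\le C(\lambda_i+\tau_i)$ locally. The Duhamel term $d\xi$ is likewise $O(\tau_i)$ locally, since $\theta$ is smooth and bounded and interior parabolic estimates apply. Dividing by $\|\beta_i(-8)\|_2\gg\sqrt i(\lambda_i+\tau_i)$, we conclude that $d\tilde\beta_i\to0$ locally, so the limit is closed.

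Lemma~\ref{lem:vanish} then gives $\|\tilde\beta_i(-1)\|_2\to0$. Hence, for large $i$,
\[
 D_{p_i}(\tau_i)\le 2\|\beta_i(-1)\|_2^2+C\tau_i^2\le\tfrac{\eps}{8}\,\|\beta_i(-8)\|_2^2\cdot 8+C\tau_i^2=\eps D_{p_i}(8\tau_i)+C\tau_i^2 ,
\]
which contradicts the choice of the sequence once $i>C$. The delicate points are two. First, the local bound on $d\beta_i$ must hold uniformly on compact subsets of the regular part; this requires care that $\lambda_i/\tau_i$ may be unbounded, which is exactly why the normalization divides by $\sqrt i(\lambda_i+\tau_i)$ rather than by $\tau_i$. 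Second, Lemma~\ref{lem:vanish} needs the divergence condition at $s=-8$ with respect to $f_i(-8)$, and this is supplied directly by the weighted projection of Lemma~\ref{lem:projection}.
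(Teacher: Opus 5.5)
Your proposal is correct and follows the paper's proof essentially step for step: a contradiction sequence, rescaling at scale $\tau_i$, the weighted projection at $s=-8$ normalized by $\|\beta_i(-8)\|_2\gg\lambda_i+\tau_i$, closedness of the limit via $d\tilde\beta_i(-1)=\|\beta_i(-8)\|_2^{-1}\bigl(d\dc U_i(-1)-d\xi_i\bigr)$ together with the local curvature bound, and then Lemma~\ref{lem:vanish}. Two small remarks: your derivation of $(\lambda_i+\tau_i)^2=o(\|\beta_i(-8)\|_2^2)$ from $\|\beta_i(-1)\|_2\le\|\beta_i(-8)\|_2$ removes the need for the paper's case split at $\eps=1/4$, and the paper avoids your pointwise interior-estimate bound on $d\xi_i$ (which needs regularity of $\tr_\omega(\theta)$ at scale $\tau_i$, not just smoothness of $\theta$) by using only $\|\xi_i\|_\infty\le C\tau_i$ and moving $d$ onto a compactly supported test form, while in your last display $\tfrac{\eps}{8}\|\beta_i(-8)\|_2^2\cdot 8$ equals $\tfrac{\eps}{8}D_{p_i}(8\tau_i)$, so the final equality should read $\le$, which is harmless.
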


\begin{proof}
For $\eps\ge1/4$, Lemma~\ref{lem:coarse} gives
\[
 D_p(\tau)\le\tfrac14D_p(8\tau)+C\tau^2
 \le\eps D_p(8\tau)+C(\lambda+\tau)^2.
\]
Suppose the statement fails for some $0<\eps<1/4$. Choose
\[
 K_i\to\infty,\qquad \eta_i\to0,\qquad C_i\to\infty,
 \qquad\tau_{*,i}\to0,\qquad K_i\tau_{*,i}<1/4.
\]
There then exist $p_i=(x_i,t_i)$ and $\tau_i$ such that
\[
 \begin{gathered}
 0<\tau_i\le\tau_{*,i},\qquad K_i\tau_i\le t_i,\qquad
 \lambda_i=1-t_i,\\
 \Nash_{p_i}(\tau_i/K_i)-\Nash_{p_i}(K_i\tau_i)\le\eta_i,
 \end{gathered}
\]
but
\begin{equation}\label{eq:failure}
 D_{p_i}(\tau_i)>\eps D_{p_i}(8\tau_i)+C_i(\lambda_i+\tau_i)^2.
\end{equation}
By \eqref{eq:coarse},
\[
 0\le\lambda_i^2
 <\frac{D_{p_i}(\tau_i)}{C_i}
 \le\frac{C\tau_i}{C_i}\longrightarrow0,
 \qquad t_i\longrightarrow1.
\]
Set $A_i^2=\tfrac18D_{p_i}(8\tau_i)$. Combining
\eqref{eq:failure} and \eqref{eq:coarse} gives
\[
 \begin{aligned}
 \eps D_{p_i}(8\tau_i)+C_i(\lambda_i+\tau_i)^2
 &<\tfrac14D_{p_i}(8\tau_i)+C\tau_i^2,\\
 (C_i-C)(\lambda_i+\tau_i)^2
 &<8(\tfrac14-\eps)A_i^2.
 \end{aligned}
\]
Thus, for $i$ sufficiently large,
\begin{equation}\label{eq:smallratio}
 A_i>0,\qquad
 \frac{(\lambda_i+\tau_i)^2}{A_i^2}
 <\frac{8(1/4-\eps)}{C_i-C}\longrightarrow0.
\end{equation}

Rescale the flow by setting
\[
 \begin{gathered}
 g_i(s)=\tau_i^{-1}g(t_i+\tau_i s),\qquad
 \omega_i(s)=\tau_i^{-1}\omega(t_i+\tau_i s),\\
 U_i(s)=U(t_i+\tau_i s),\qquad
 \nu_{i,s}=\nu_{x_i,t_i;t_i+\tau_i s}.
 \end{gathered}
\]
All norms below use $g_i(s)$ and $\nu_{i,s}$ at the indicated time.
By Lemma~\ref{lem:projection} and \eqref{eq:factor8}, we may write
\[
 \begin{gathered}
 \dc U_i(-8)=\zeta_i+A_i\beta_i(-8),\qquad d\zeta_i=0,\\
 \|\dc U_i(-8)-\zeta_i\|_2^2
 =\tfrac18D_{p_i}(8\tau_i)=A_i^2,\\
 \|\beta_i(-8)\|_2=1,\qquad
 \delta_{f_i(-8)}\beta_i(-8)=0.
 \end{gathered}
\]
Let $\HH^{(i)}$ be the Hodge heat propagator of $g_i$, and set
\[
 \zeta_i(s)=\HH^{(i)}_{-8,s}\zeta_i,\qquad
 \beta_i(s)=\HH^{(i)}_{-8,s}\beta_i(-8).
\]
Lemma~\ref{lem:hodgeheat} gives, for $-8\le s\le-1/2$,
\[
 \begin{gathered}
 d\zeta_i(s)=0,\qquad
 \partial_s\beta_i=-\Delta_{d,g_i(s)}\beta_i,\\
 \|\beta_i(s)\|_2^2
 +2\int_{-8}^{s}\!\int_X|\nabla\beta_i|^2\,d\nu_{i,u}\,du=1.
 \end{gathered}
\]
Theorem~\ref{thm:compactness} applies. On regular spacetime cylinders
$Q'\Subset Q\Subset\RR_{(-8,-1/2)}$, use the smooth convergence maps
to identify the corresponding regions. Positivity of the limiting heat
density and interior parabolic estimates give
\[
 \begin{gathered}
 \frac{d\nu_{i,s}}{dV_{g_i(s)}}\ge c_Q>0,\qquad
 \int_Q|\beta_i|^2\,dV_{g_i(s)}\,ds
 \le c_Q^{-1}\int_{-8}^{-1/2}\!\int_X|\beta_i|^2\,d\nu_{i,s}\,ds
 \le C_Q,\\
 \|\beta_i\|_{C^k(Q')}
 \le C_{Q',Q,k}\|\beta_i\|_{L^2(Q)}\le C_{Q',Q,k}.
 \end{gathered}
\]
After taking a diagonal subsequence,
\[
 \beta_i\longrightarrow\beta
 \quad\text{in }C^\infty_{\mathrm{loc}}(\RR_{(-8,-1/2)}).
\]

By \eqref{eq:duhamel} and \eqref{eq:duhamelerror},
\[
 \dc U_i(-1)=\zeta_i(-1)+A_i\beta_i(-1)+\xi_i,
 \qquad \|\xi_i\|_\infty\le C\tau_i.
\]
Taking the exterior derivative gives
\begin{equation}\label{eq:closedlimit}
 d\beta_i(-1)=A_i^{-1}d\dc U_i(-1)-d(A_i^{-1}\xi_i).
\end{equation}
Here $\theta$ denotes the original fixed form in \eqref{eq:terminal}.
On every compact subset $K\Subset\RR_{-1}$, \eqref{eq:potential} and
the trace bound yield
\[
 \begin{gathered}
 \ddbar U_i(-1)
 =\tau_i\omega_i(-1)-(\lambda_i+\tau_i)\Ric(\omega_i(-1))-\theta,\\
 \|\Ric(\omega_i(-1))\|_{L^\infty(K)}\le C_K,\qquad
 |\theta|_{\omega_i(-1)}\le C\tau_i,\\
 \left\|A_i^{-1}d\dc U_i(-1)\right\|_{L^\infty(K)}
 \le C_K\frac{\lambda_i+\tau_i}{A_i}\longrightarrow0,\qquad
 \|A_i^{-1}\xi_i\|_\infty
 \le C\frac{\tau_i}{A_i}\longrightarrow0.
 \end{gathered}
\]
For a smooth compactly supported two-form $\psi$ on the regular slice,
with all forms pulled back by the convergence maps, integration by parts
therefore gives
\[
 \begin{aligned}
 \left|\int\ip{d(A_i^{-1}\xi_i)}{\psi}\,dV_{g_i(-1)}\right|
 &=\left|\int\ip{A_i^{-1}\xi_i}{\delta_{g_i(-1)}\psi}\,dV_{g_i(-1)}\right|\\
 &\le C_\psi\frac{\tau_i}{A_i}\longrightarrow0.
 \end{aligned}
\]
Together with \eqref{eq:closedlimit} and smooth local convergence, this
shows
\[
 d\beta_i(-1)\longrightarrow0\quad\text{in distributions},
 \qquad d\beta(-1)=0.
\]
Lemma~\ref{lem:vanish} now implies
\[
 \|\beta_i(-1)\|_2\longrightarrow0.
\]
Since $d\zeta_i(-1)=0$ and $\nu_{i,-1}(X)=1$, we conclude
\[
 \begin{aligned}
 \frac{D_{p_i}(\tau_i)}{A_i^2}
 &\le A_i^{-2}\|\dc U_i(-1)-\zeta_i(-1)\|_2^2\\
 &=\|\beta_i(-1)+A_i^{-1}\xi_i\|_2^2\\
 &\le2\|\beta_i(-1)\|_2^2
       +C\left(\frac{\tau_i}{A_i}\right)^2\longrightarrow0.
 \end{aligned}
\]
\Needspace{6\baselineskip}
On the other hand, \eqref{eq:failure} gives
\[
 \frac{D_{p_i}(\tau_i)}{A_i^2}
 >\eps\frac{D_{p_i}(8\tau_i)}{A_i^2}=8\eps>0,
\]
which is a contradiction.
\end{proof}

\begin{proof}[Proof of Theorem~\ref{thm:main}]
Fix $\eps<1/64$, together with the corresponding constants from
Proposition~\ref{prop:improvement}, and decrease $\tau_*$ if necessary
so that $K\tau_*<1/4$. Let $t>1/2$ with $\lambda=1-t$ small, fix
$p=(x,t)$, and set
\[
 \tau_j=8^j\lambda\quad(0\le j\le m),
 \qquad \tau_*/8<\tau_m\le \tau_*.
\]
All the required earlier times are then available. We call an index
$j<m$ good if \eqref{eq:goodscale} holds at $\tau_j$, and bad otherwise.

Let $N$ denote a uniform entropy bound supplied by
Lemma~\ref{lem:entropy}, so that $-N\le\Nash_p(\tau)\le0$.
The number of bad indices is uniformly bounded. Indeed, the
nonincreasing function $u\mapsto\Nash_p(e^u)$ has total decrease at most
$N$, and the entropy drop at $\tau_j$ is its decrease on the interval
\[
 [\log \tau_j-\log K,\ \log \tau_j+\log K].
\]
Since the centers of these intervals are separated by $\log8$, the
intervals have uniformly bounded overlap. If $M$ denotes their overlap
multiplicity, the sum of the drops is at most $MN$. As each bad index
contributes more than $\eta$, there are at most $B=\lceil MN/\eta\rceil$
bad indices.

At a good index we use \eqref{eq:improvement}, and at a bad index we use
\eqref{eq:coarse}. Since $\lambda\le \tau_j$, both give
\[
 D_p(\tau_j)\le a_jD_p(\tau_{j+1})+C\tau_j^2,
 \qquad a_j=\eps\ \text{or}\ \tfrac14.
\]
Every product of the first $j$ coefficients is at most $C_B\eps^j$,
where $C_B=(1/(4\eps))^B$, and iterating yields
\begin{equation}\label{eq:iteration}
 D_p(\lambda)
 \le C_B\eps^mD_p(\tau_m)
       +CC_B\lambda^2\sum_{j=0}^{m-1}(64\eps)^j.
\end{equation}
The series on the right is bounded. Set $a=-\log_8\eps>2$. By the
coarse bound and the fixed positive lower bound for $\tau_m$,
\[
 \eps^mD_p(\tau_m)
 \le C(\lambda/\tau_m)^a \tau_m
 =C\lambda^a \tau_m^{1-a}\le C\lambda^2.
\]
Thus $D_p(\lambda)\le C\lambda^2$, uniformly in $p$, and
Proposition~\ref{prop:scalar} gives $(1-t)R(x,t)\le C$.
\end{proof}


\bigskip
\bigskip

\begin{thebibliography}{99}
\bibitem{BamCompact}
R. H. Bamler,
\emph{Compactness theory of the space of super Ricci flows},
\href{https://arxiv.org/abs/2008.09298}{arXiv:2008.09298}.

\bibitem{BamHeat}
R. H. Bamler,
\emph{Entropy and heat kernel bounds on a Ricci flow background},
\href{https://arxiv.org/abs/2008.07093v3}{arXiv:2008.07093v3} (2021).

\bibitem{BamStructure}
R. H. Bamler,
\emph{Structure theory of non-collapsed limits of Ricci flows},
\href{https://arxiv.org/abs/2009.03243v2}{arXiv:2009.03243v2} (2021).

\bibitem{CCHZ}
C. Cifarelli, R. Conlon, M. Hallgren and J. Zhang,
\emph{Finite time singularities of the Ricci flow on compact
K\"ahler surfaces are of Type I},
\href{https://arxiv.org/abs/2609.16733}{arXiv:2609.16733} (2026).

\bibitem{CHM}
R. J. Conlon, M. Hallgren and Z. Ma,
\emph{Non-collapsed finite time singularities of the Ricci flow on
compact K\"ahler surfaces are of Type I},
\href{https://arxiv.org/abs/2502.19804}{arXiv:2502.19804} (2025).

\bibitem{HX}
C. Hacon and L. Xie,
\emph{On the K\"ahler MMP and the transcendental base-point-free theorem},
\href{https://arxiv.org/abs/2607.24986v1}{arXiv:2607.24986v1} (2026).

\bibitem{Hallgren}
M. Hallgren,
\emph{K\"ahler-Ricci tangent flows are infinitesimally algebraic},
\href{https://arxiv.org/abs/2312.06577v2}{arXiv:2312.06577v2} (2024).

\bibitem{HJ}
M. Hallgren and W. Jian,
\emph{Tangent flows of K\"ahler metric flows},
J. Reine Angew. Math. \textbf{805} (2023), 143-184.
\href{https://arxiv.org/abs/2202.06185}{arXiv:2202.06185}.

\bibitem{Hamilton}
R. S. Hamilton,
\href{https://doi.org/10.4310/jdg/1214436922}{\emph{Three-manifolds
with positive Ricci curvature}},
J. Differential Geom. \textbf{17} (1982), no.~2, 255-306.

\bibitem{JSFanoII}
W. Jian and J. Song,
\emph{Finite-Time Singularities of the K\"ahler-Ricci Flow on
Fano Bundles II},
\href{https://arxiv.org/abs/2609.20513}{arXiv:2609.20513} (2026).

\bibitem{JSTypeII}
W. Jian and J. Song,
\emph{Type I estimates for the K\"ahler-Ricci flow II},
forthcoming preprint.

\bibitem{JST}
W. Jian, J. Song and G. Tian,
\href{https://doi.org/10.1007/s00222-026-01449-x}{\emph{Finite time
singularities of the K\"ahler-Ricci flow}},
Invent. Math. (2026), published online 15 September 2026.
Preprint: \href{https://arxiv.org/abs/2310.07945v2}{arXiv:2310.07945v2}
(the numbering used here).

\bibitem{Perelman}
G. Perelman,
\emph{The entropy formula for the Ricci flow and its geometric applications},
\href{https://arxiv.org/abs/math/0211159}{arXiv:math/0211159} (2002).

\bibitem{SesumTian}
N. Sesum and G. Tian,
\href{https://doi.org/10.1017/S1474748008000133}{\emph{Bounding scalar
curvature and diameter along the K\"ahler Ricci flow (after Perelman)}},
J. Inst. Math. Jussieu \textbf{7} (2008), no.~3, 575-587.

\bibitem{STScalar}
J. Song and G. Tian,
\emph{Bounding scalar curvature for global solutions of the
K\"ahler-Ricci flow},
\href{https://arxiv.org/abs/1111.5681}{arXiv:1111.5681} (2011).

\bibitem{STMeasures}
J. Song and G. Tian,
\href{https://doi.org/10.1090/S0894-0347-2011-00717-0}{\emph{Canonical
measures and K\"ahler-Ricci flow}},
J. Amer. Math. Soc. \textbf{25} (2012), no.~2, 303-353.

\bibitem{ST}
J. Song and G. Tian,
\emph{The K\"ahler-Ricci flow through singularities},
Invent. Math. \textbf{207} (2017), 519-595.
\href{https://arxiv.org/abs/0909.4898v1}{arXiv:0909.4898v1}.

\bibitem{SW}
J. Song and B. Weinkove,
\href{https://doi.org/10.1215/00127094-1962881}{\emph{Contracting
exceptional divisors by the K\"ahler-Ricci flow}},
Duke Math. J. \textbf{162} (2013), no.~2, 367-415.

\bibitem{TianZhang}
G. Tian and Z. Zhang,
\href{https://doi.org/10.1007/s11401-005-0533-x}{\emph{On the K\"ahler-Ricci
flow on projective manifolds of general type}},
Chinese Ann. Math. Ser. B \textbf{27} (2006), no.~2, 179-192.

\bibitem{Tsuji}
H. Tsuji,
\href{https://doi.org/10.1007/BF01449219}{\emph{Existence and degeneration
of K\"ahler-Einstein metrics on minimal algebraic varieties of general type}},
Math. Ann. \textbf{281} (1988), no.~1, 123-133.


\bibitem{XZ}
T. Xu and Z. Zhang,
\emph{Collapsed Finite Time Singularities of the K\"ahler-Ricci Flow on Complex Surfaces are of Type I}, \href{https://arxiv.org/abs/2609.18834}{arXiv:2609.18834} (2026).
 


\bibitem{ZhangFinite}
Z. Zhang,
\emph{Scalar curvature behavior for finite time singularity of
K\"ahler-Ricci flow},
\href{https://arxiv.org/abs/0901.1474}{arXiv:0901.1474} (2009).
\end{thebibliography}
\end{document}